\newtheorem{thm}{Theorem}[section]
\newtheorem{prop}[thm]{Proposition}
\newtheorem{lem}[thm]{Lemma}
\theoremstyle{definition}
\newtheorem{defn}{Definition}
\newtheorem{obs}[thm]{Observation}
\theoremstyle{remark}
\newtheorem{remark}[thm]{Remark}
\newcommand{\Z}{\mathbb{Z}}
\newcommand{\N}{\mathbb{N}}
\newcommand{\divides}{\ |\ }
\newcommand{\seqnum}[1]{\href{https://oeis.org/#1}{\rm \underline{#1}}}
\tikzset{edge/.style = {->,> = latex'}}
\newcommand{\edge}[2]{\draw[edge,draw=black,thick] (#1) to (#2);}
\tikzset{every picture/.style={line width=0.75pt}}
\def\modd#1 #2{#1\ \mbox{\rm (mod}\ #2\mbox{\rm )}}
\begin{document}

\title{Directed Graphs from Exact Covering Systems }
\author{Dana Neidmann \\
Department of Mathematics \\
University of Illinois at Urbana-Champaign \\ 
Urbana, IL 61801\\
USA \\
\url{dn2@illinois.edu}
\thanks{This work was supported in part by the University of Illinois Urbana-Champaign Research Board Award [RB19143].}}
\date{}

\maketitle

\begin{abstract}
    Given an exact covering system $S = \{a_i$ (mod $d_i$) $: 1 \leq i \leq r\}$, we introduce the corresponding exact covering system digraph (ECSD) $G_S = G(d_1n+a_1, \ldots, d_rn+a_r)$. The vertices of $G_S$ are the integers and the edges are $(n, d_in+a_i)$ for each  $n \in \Z$ and for each congruence in the covering system. We study the structure of these directed graphs, which have finitely many components, one cycle per component, as well as indegree 1 and outdegree $r$ at each vertex. We also explore the link between ECSDs that have a single component and non-standard digital representations of integers. 
\end{abstract}

\section{Introduction}

In this paper, we take inspiration from the fundamental concept of covering systems to create an associated family of directed graphs on the integers.

\begin{defn} For integers $a_i$, $d_i$, and $r$, a system of congruences 
\[\{x \equiv \modd{a_i} {d_i}:  1 \leq i \leq r\}\] 
is called a \textit{covering system} if every integer $n$ satisfies $n \equiv a_i$ (mod $d_i$) for at least one value of  $i$, equivalently we say $n$ is covered by the congruence $x \equiv a_i$ (mod $d_i$). A covering system in which each integer is covered by exactly one congruence is called an \textit{exact covering system}.
\end{defn}

As congruences denote equivalence classes, choosing an equivalent representative $a_i$ does not change the congruence, for example, $x \equiv \modd{1} {2}$ and $x\equiv \modd{3} {2}$ denote the same equivalence class. However, for our purposes in this paper, we do care about the choice of representative. 

Similarly, although number theorists customarily think about congruences modulo positive numbers only, it will make sense for us to consider negative moduli as well: We say that an integer $n$ is congruent to $a$ (mod $-d$) if and only if $n \equiv a$ (mod $d$). This is because $d \divides n-a$ if and only if $-d \divides n-a$. Thus we can think of $-d$ as another representative of the modulus $d$, so the congruences $x \equiv 1$ (mod 2) and $x \equiv 1$ (mod $-2$) denote the same equivalence class. However, for our purposes we will also care about the sign of the modulus. 

To clarify these choices, for a given exact covering system we can introduce a \textit{representative set} denoting the specific choice of representatives. 

\begin{defn}
The set of pairs $S = \{(a_i, d_i) \in \Z^2 : 1 \leq i \leq r\}$ is a \textit{representative set for an exact covering system} if  the system of congruences $\{x \equiv \modd{a_i} {d_i} : 1 \leq i \leq r\}$ is an exact covering system. 
\end{defn}

For each representative set, we introduce a corresponding (infinite) directed graph on the set of integers (note that the definition can be extended to any covering system). 

\begin{defn} Let $S = \{(a_i, d_i) \in \Z^2 : 1 \leq i \leq r\}$ be a representative set for an exact covering system. We denote the corresponding \textit{exact covering system digraph (ECSD)} 
\[G_S = G(d_1n+a_1, \ldots, d_rn + a_r) := (V,E),\] 
where $V(G_S) = \Z$, and $E(G_S) = \{(n, d_in+a_i) : 1 \leq i \leq r\}$. Note that the parameters of $G_S$ are the $r$ integers to which each $n \in \Z$ is sent. Let $r$ be called the \textit{degree} of $G_S$ (this is the outdegree of every vertex). 
\end{defn}

\begin{figure}[h]
    \centering
\begin{tikzpicture}[x=0.75pt,y=0.75pt,yscale=-.75,xscale=.75]

\draw [color={rgb, 255:red, 0; green, 0; blue, 0 }  ,draw opacity=1 ]   (52.5,87) .. controls (31.92,58.58) and (83.37,58.97) .. (70.38,84.41) ;
\draw [shift={(69.5,86)}, rotate = 300.65] [color={rgb, 255:red, 0; green, 0; blue, 0 }  ,draw opacity=1 ][line width=0.75]    (10.93,-3.29) .. controls (6.95,-1.4) and (3.31,-0.3) .. (0,0) .. controls (3.31,0.3) and (6.95,1.4) .. (10.93,3.29)   ;

\draw [color={rgb, 255:red, 0; green, 0; blue, 0 }  ,draw opacity=1 ]   (588.5,83) .. controls (567.92,54.58) and (619.37,54.97) .. (606.38,80.41) ;
\draw [shift={(605.5,82)}, rotate = 300.65] [color={rgb, 255:red, 0; green, 0; blue, 0 }  ,draw opacity=1 ][line width=0.75]    (10.93,-3.29) .. controls (6.95,-1.4) and (3.31,-0.3) .. (0,0) .. controls (3.31,0.3) and (6.95,1.4) .. (10.93,3.29)   ;

\draw (61,97) node    {$0$};
\draw (598,95) node    {$9$};
\draw (60,147) node    {$-9$};
\draw (598,143) node    {$18$};
\draw (45,196) node    {$\vdots $};
\draw (82,196) node    {$\vdots $};
\draw (582,194) node    {$\vdots $};
\draw (618,195) node    {$\vdots $};
\draw (225,148) node    {$1$};

\draw (251,193) node    {$2$};

\draw (308,193) node    {$4$};

\draw (332,145) node    {$8$};

\draw (307,102) node    {$7$};

\draw (248,102) node    {$5$};

\draw (498,122) node    {$3$};

\draw (498,167) node    {$6$};

\draw (179,148) node    {$-7$};

\draw (230,234) node    {$-5$};

\draw (333,234) node    {$-1$};

\draw (378,146) node    {$16$};

\draw (329,63) node    {$14$};

\draw (218,61) node    {$10$};

\draw (496,75) node    {$-3$};

\draw (496,219) node    {$12$};

\draw (137,104) node  [rotate=-318.08]  {$\vdots $};

\draw (137,173) node  [rotate=-249.25]  {$\vdots $};

\draw (181,262) node  [rotate=-236]  {$\vdots $};

\draw (234,281) node  [rotate=-175.05]  {$\vdots $};

\draw (320,280) node  [rotate=-182.84]  {$\vdots $};

\draw (377,248) node  [rotate=-102.14]  {$\vdots $};

\draw (428,180) node  [rotate=-117.9]  {$\vdots $};

\draw (421,113) node  [rotate=-57.97]  {$\vdots $};

\draw (379,57) node  [rotate=-83.95]  {$\vdots $};

\draw (325,19) node  [rotate=-359.26]  {$\vdots $};

\draw (221,18) node  [rotate=-5.3]  {$\vdots $};

\draw (173,51) node  [rotate=-285.94]  {$\vdots $};

\draw (465,31) node  [rotate=-323.16]  {$\vdots $};

\draw (533,34) node  [rotate=-42.76]  {$\vdots $};

\draw (462,262) node  [rotate=-221.46]  {$\vdots $};

\draw (531,262) node  [rotate=-135.33]  {$\vdots $};

\draw [color={rgb, 255:red, 0; green, 0; blue, 0 }  ,draw opacity=1 ]   (598,107) -- (598,129) ;
\draw [shift={(598,131)}, rotate = 270] [color={rgb, 255:red, 0; green, 0; blue, 0 }  ,draw opacity=1 ][line width=0.75]    (10.93,-3.29) .. controls (6.95,-1.4) and (3.31,-0.3) .. (0,0) .. controls (3.31,0.3) and (6.95,1.4) .. (10.93,3.29)   ;

\draw [color={rgb, 255:red, 0; green, 0; blue, 0 }  ,draw opacity=1 ]   (60.76,109) -- (60.28,133) ;
\draw [shift={(60.24,135)}, rotate = 271.15] [color={rgb, 255:red, 0; green, 0; blue, 0 }  ,draw opacity=1 ][line width=0.75]    (10.93,-3.29) .. controls (6.95,-1.4) and (3.31,-0.3) .. (0,0) .. controls (3.31,0.3) and (6.95,1.4) .. (10.93,3.29)   ;

\draw [color={rgb, 255:red, 0; green, 0; blue, 0 }  ,draw opacity=1 ]   (594.24,155) -- (586.36,180.09) ;
\draw [shift={(585.76,182)}, rotate = 287.42] [color={rgb, 255:red, 0; green, 0; blue, 0 }  ,draw opacity=1 ][line width=0.75]    (10.93,-3.29) .. controls (6.95,-1.4) and (3.31,-0.3) .. (0,0) .. controls (3.31,0.3) and (6.95,1.4) .. (10.93,3.29)   ;

\draw [color={rgb, 255:red, 0; green, 0; blue, 0 }  ,draw opacity=1 ]   (602.62,155) -- (612.67,181.13) ;
\draw [shift={(613.38,183)}, rotate = 248.95999999999998] [color={rgb, 255:red, 0; green, 0; blue, 0 }  ,draw opacity=1 ][line width=0.75]    (10.93,-3.29) .. controls (6.95,-1.4) and (3.31,-0.3) .. (0,0) .. controls (3.31,0.3) and (6.95,1.4) .. (10.93,3.29)   ;

\draw [color={rgb, 255:red, 0; green, 0; blue, 0 }  ,draw opacity=1 ]   (56.33,159) -- (49.26,182.09) ;
\draw [shift={(48.67,184)}, rotate = 287.02] [color={rgb, 255:red, 0; green, 0; blue, 0 }  ,draw opacity=1 ][line width=0.75]    (10.93,-3.29) .. controls (6.95,-1.4) and (3.31,-0.3) .. (0,0) .. controls (3.31,0.3) and (6.95,1.4) .. (10.93,3.29)   ;

\draw [color={rgb, 255:red, 0; green, 0; blue, 0 }  ,draw opacity=1 ]   (65.39,159) -- (75.79,182.18) ;
\draw [shift={(76.61,184)}, rotate = 245.82] [color={rgb, 255:red, 0; green, 0; blue, 0 }  ,draw opacity=1 ][line width=0.75]    (10.93,-3.29) .. controls (6.95,-1.4) and (3.31,-0.3) .. (0,0) .. controls (3.31,0.3) and (6.95,1.4) .. (10.93,3.29)   ;

\draw [color={rgb, 255:red, 0; green, 0; blue, 0 }  ,draw opacity=1 ]   (231.93,160) -- (243.07,179.27) ;
\draw [shift={(244.07,181)}, rotate = 239.98] [color={rgb, 255:red, 0; green, 0; blue, 0 }  ,draw opacity=1 ][line width=0.75]    (10.93,-3.29) .. controls (6.95,-1.4) and (3.31,-0.3) .. (0,0) .. controls (3.31,0.3) and (6.95,1.4) .. (10.93,3.29)   ;

\draw [color={rgb, 255:red, 0; green, 0; blue, 0 }  ,draw opacity=1 ]   (260.5,193) -- (296.5,193) ;
\draw [shift={(298.5,193)}, rotate = 180] [color={rgb, 255:red, 0; green, 0; blue, 0 }  ,draw opacity=1 ][line width=0.75]    (10.93,-3.29) .. controls (6.95,-1.4) and (3.31,-0.3) .. (0,0) .. controls (3.31,0.3) and (6.95,1.4) .. (10.93,3.29)   ;

\draw [color={rgb, 255:red, 0; green, 0; blue, 0 }  ,draw opacity=1 ]   (314,181) -- (325.11,158.79) ;
\draw [shift={(326,157)}, rotate = 476.57] [color={rgb, 255:red, 0; green, 0; blue, 0 }  ,draw opacity=1 ][line width=0.75]    (10.93,-3.29) .. controls (6.95,-1.4) and (3.31,-0.3) .. (0,0) .. controls (3.31,0.3) and (6.95,1.4) .. (10.93,3.29)   ;

\draw [color={rgb, 255:red, 0; green, 0; blue, 0 }  ,draw opacity=1 ]   (242,114) -- (231.89,134.21) ;
\draw [shift={(231,136)}, rotate = 296.57] [color={rgb, 255:red, 0; green, 0; blue, 0 }  ,draw opacity=1 ][line width=0.75]    (10.93,-3.29) .. controls (6.95,-1.4) and (3.31,-0.3) .. (0,0) .. controls (3.31,0.3) and (6.95,1.4) .. (10.93,3.29)   ;

\draw [color={rgb, 255:red, 0; green, 0; blue, 0 }  ,draw opacity=1 ]   (297.5,102) -- (259.5,102) ;
\draw [shift={(257.5,102)}, rotate = 360] [color={rgb, 255:red, 0; green, 0; blue, 0 }  ,draw opacity=1 ][line width=0.75]    (10.93,-3.29) .. controls (6.95,-1.4) and (3.31,-0.3) .. (0,0) .. controls (3.31,0.3) and (6.95,1.4) .. (10.93,3.29)   ;

\draw [color={rgb, 255:red, 0; green, 0; blue, 0 }  ,draw opacity=1 ]   (325.02,133) -- (314.98,115.73) ;
\draw [shift={(313.98,114)}, rotate = 419.83000000000004] [color={rgb, 255:red, 0; green, 0; blue, 0 }  ,draw opacity=1 ][line width=0.75]    (10.93,-3.29) .. controls (6.95,-1.4) and (3.31,-0.3) .. (0,0) .. controls (3.31,0.3) and (6.95,1.4) .. (10.93,3.29)   ;

\draw [color={rgb, 255:red, 0; green, 0; blue, 0 }  ,draw opacity=1 ]   (507.5,130.7) .. controls (513.45,138.62) and (513.78,147.04) .. (508.48,155.95) ;
\draw [shift={(507.5,157.51)}, rotate = 303.59000000000003] [color={rgb, 255:red, 0; green, 0; blue, 0 }  ,draw opacity=1 ][line width=0.75]    (10.93,-3.29) .. controls (6.95,-1.4) and (3.31,-0.3) .. (0,0) .. controls (3.31,0.3) and (6.95,1.4) .. (10.93,3.29)   ;

\draw [color={rgb, 255:red, 0; green, 0; blue, 0 }  ,draw opacity=1 ]   (488.5,157.84) .. controls (482.53,149.23) and (482.17,140.78) .. (487.43,132.49) ;
\draw [shift={(488.5,130.9)}, rotate = 485.82] [color={rgb, 255:red, 0; green, 0; blue, 0 }  ,draw opacity=1 ][line width=0.75]    (10.93,-3.29) .. controls (6.95,-1.4) and (3.31,-0.3) .. (0,0) .. controls (3.31,0.3) and (6.95,1.4) .. (10.93,3.29)   ;

\draw [color={rgb, 255:red, 0; green, 0; blue, 0 }  ,draw opacity=1 ]   (315.32,205) -- (324.64,220.29) ;
\draw [shift={(325.68,222)}, rotate = 238.63] [color={rgb, 255:red, 0; green, 0; blue, 0 }  ,draw opacity=1 ][line width=0.75]    (10.93,-3.29) .. controls (6.95,-1.4) and (3.31,-0.3) .. (0,0) .. controls (3.31,0.3) and (6.95,1.4) .. (10.93,3.29)   ;

\draw [color={rgb, 255:red, 0; green, 0; blue, 0 }  ,draw opacity=1 ]   (341.5,145.21) -- (363.5,145.68) ;
\draw [shift={(365.5,145.73)}, rotate = 181.25] [color={rgb, 255:red, 0; green, 0; blue, 0 }  ,draw opacity=1 ][line width=0.75]    (10.93,-3.29) .. controls (6.95,-1.4) and (3.31,-0.3) .. (0,0) .. controls (3.31,0.3) and (6.95,1.4) .. (10.93,3.29)   ;

\draw [color={rgb, 255:red, 0; green, 0; blue, 0 }  ,draw opacity=1 ]   (313.77,90) -- (321.25,76.74) ;
\draw [shift={(322.23,75)}, rotate = 479.43] [color={rgb, 255:red, 0; green, 0; blue, 0 }  ,draw opacity=1 ][line width=0.75]    (10.93,-3.29) .. controls (6.95,-1.4) and (3.31,-0.3) .. (0,0) .. controls (3.31,0.3) and (6.95,1.4) .. (10.93,3.29)   ;

\draw [color={rgb, 255:red, 0; green, 0; blue, 0 }  ,draw opacity=1 ]   (239.22,90) -- (227.96,74.61) ;
\draw [shift={(226.78,73)}, rotate = 413.81] [color={rgb, 255:red, 0; green, 0; blue, 0 }  ,draw opacity=1 ][line width=0.75]    (10.93,-3.29) .. controls (6.95,-1.4) and (3.31,-0.3) .. (0,0) .. controls (3.31,0.3) and (6.95,1.4) .. (10.93,3.29)   ;

\draw [color={rgb, 255:red, 0; green, 0; blue, 0 }  ,draw opacity=1 ]   (244.85,205) -- (237.06,220.22) ;
\draw [shift={(236.15,222)}, rotate = 297.12] [color={rgb, 255:red, 0; green, 0; blue, 0 }  ,draw opacity=1 ][line width=0.75]    (10.93,-3.29) .. controls (6.95,-1.4) and (3.31,-0.3) .. (0,0) .. controls (3.31,0.3) and (6.95,1.4) .. (10.93,3.29)   ;

\draw [color={rgb, 255:red, 0; green, 0; blue, 0 }  ,draw opacity=1 ]   (215.5,148) -- (194.5,148) ;
\draw [shift={(192.5,148)}, rotate = 360] [color={rgb, 255:red, 0; green, 0; blue, 0 }  ,draw opacity=1 ][line width=0.75]    (10.93,-3.29) .. controls (6.95,-1.4) and (3.31,-0.3) .. (0,0) .. controls (3.31,0.3) and (6.95,1.4) .. (10.93,3.29)   ;

\draw [color={rgb, 255:red, 0; green, 0; blue, 0 }  ,draw opacity=1 ]   (497.49,110) -- (496.6,89) ;
\draw [shift={(496.51,87)}, rotate = 447.56] [color={rgb, 255:red, 0; green, 0; blue, 0 }  ,draw opacity=1 ][line width=0.75]    (10.93,-3.29) .. controls (6.95,-1.4) and (3.31,-0.3) .. (0,0) .. controls (3.31,0.3) and (6.95,1.4) .. (10.93,3.29)   ;

\draw [color={rgb, 255:red, 0; green, 0; blue, 0 }  ,draw opacity=1 ]   (497.54,179) -- (496.54,205) ;
\draw [shift={(496.46,207)}, rotate = 272.2] [color={rgb, 255:red, 0; green, 0; blue, 0 }  ,draw opacity=1 ][line width=0.75]    (10.93,-3.29) .. controls (6.95,-1.4) and (3.31,-0.3) .. (0,0) .. controls (3.31,0.3) and (6.95,1.4) .. (10.93,3.29)   ;

\draw [color={rgb, 255:red, 0; green, 0; blue, 0 }  ,draw opacity=1 ]   (205.5,58.22) -- (189.18,54.59) ;
\draw [shift={(187.22,54.16)}, rotate = 372.53] [color={rgb, 255:red, 0; green, 0; blue, 0 }  ,draw opacity=1 ][line width=0.75]    (10.93,-3.29) .. controls (6.95,-1.4) and (3.31,-0.3) .. (0,0) .. controls (3.31,0.3) and (6.95,1.4) .. (10.93,3.29)   ;

\draw [color={rgb, 255:red, 0; green, 0; blue, 0 }  ,draw opacity=1 ]   (218.84,49) -- (219.97,32.83) ;
\draw [shift={(220.1,30.83)}, rotate = 453.99] [color={rgb, 255:red, 0; green, 0; blue, 0 }  ,draw opacity=1 ][line width=0.75]    (10.93,-3.29) .. controls (6.95,-1.4) and (3.31,-0.3) .. (0,0) .. controls (3.31,0.3) and (6.95,1.4) .. (10.93,3.29)   ;

\draw [color={rgb, 255:red, 0; green, 0; blue, 0 }  ,draw opacity=1 ]   (327.91,51) -- (326.28,33.11) ;
\draw [shift={(326.1,31.12)}, rotate = 444.81] [color={rgb, 255:red, 0; green, 0; blue, 0 }  ,draw opacity=1 ][line width=0.75]    (10.93,-3.29) .. controls (6.95,-1.4) and (3.31,-0.3) .. (0,0) .. controls (3.31,0.3) and (6.95,1.4) .. (10.93,3.29)   ;

\draw [color={rgb, 255:red, 0; green, 0; blue, 0 }  ,draw opacity=1 ]   (341.5,61.5) -- (364.07,58.79) ;
\draw [shift={(366.05,58.55)}, rotate = 533.1600000000001] [color={rgb, 255:red, 0; green, 0; blue, 0 }  ,draw opacity=1 ][line width=0.75]    (10.93,-3.29) .. controls (6.95,-1.4) and (3.31,-0.3) .. (0,0) .. controls (3.31,0.3) and (6.95,1.4) .. (10.93,3.29)   ;

\draw [color={rgb, 255:red, 0; green, 0; blue, 0 }  ,draw opacity=1 ]   (390.5,136.41) -- (403.9,126.13) ;
\draw [shift={(405.48,124.91)}, rotate = 502.5] [color={rgb, 255:red, 0; green, 0; blue, 0 }  ,draw opacity=1 ][line width=0.75]    (10.93,-3.29) .. controls (6.95,-1.4) and (3.31,-0.3) .. (0,0) .. controls (3.31,0.3) and (6.95,1.4) .. (10.93,3.29)   ;

\draw [color={rgb, 255:red, 0; green, 0; blue, 0 }  ,draw opacity=1 ]   (390.5,154.5) -- (411.06,168.48) ;
\draw [shift={(412.72,169.61)}, rotate = 214.22] [color={rgb, 255:red, 0; green, 0; blue, 0 }  ,draw opacity=1 ][line width=0.75]    (10.93,-3.29) .. controls (6.95,-1.4) and (3.31,-0.3) .. (0,0) .. controls (3.31,0.3) and (6.95,1.4) .. (10.93,3.29)   ;

\draw [color={rgb, 255:red, 0; green, 0; blue, 0 }  ,draw opacity=1 ]   (346.5,238.3) -- (361.32,243.01) ;
\draw [shift={(363.23,243.62)}, rotate = 197.65] [color={rgb, 255:red, 0; green, 0; blue, 0 }  ,draw opacity=1 ][line width=0.75]    (10.93,-3.29) .. controls (6.95,-1.4) and (3.31,-0.3) .. (0,0) .. controls (3.31,0.3) and (6.95,1.4) .. (10.93,3.29)   ;

\draw [color={rgb, 255:red, 0; green, 0; blue, 0 }  ,draw opacity=1 ]   (329.61,246) -- (324.06,265.62) ;
\draw [shift={(323.52,267.54)}, rotate = 285.78] [color={rgb, 255:red, 0; green, 0; blue, 0 }  ,draw opacity=1 ][line width=0.75]    (10.93,-3.29) .. controls (6.95,-1.4) and (3.31,-0.3) .. (0,0) .. controls (3.31,0.3) and (6.95,1.4) .. (10.93,3.29)   ;

\draw [color={rgb, 255:red, 0; green, 0; blue, 0 }  ,draw opacity=1 ]   (231.02,246) -- (232.74,266.22) ;
\draw [shift={(232.91,268.22)}, rotate = 265.14] [color={rgb, 255:red, 0; green, 0; blue, 0 }  ,draw opacity=1 ][line width=0.75]    (10.93,-3.29) .. controls (6.95,-1.4) and (3.31,-0.3) .. (0,0) .. controls (3.31,0.3) and (6.95,1.4) .. (10.93,3.29)   ;

\draw [color={rgb, 255:red, 0; green, 0; blue, 0 }  ,draw opacity=1 ]   (216.5,241.71) -- (198.34,252.09) ;
\draw [shift={(196.6,253.08)}, rotate = 330.26] [color={rgb, 255:red, 0; green, 0; blue, 0 }  ,draw opacity=1 ][line width=0.75]    (10.93,-3.29) .. controls (6.95,-1.4) and (3.31,-0.3) .. (0,0) .. controls (3.31,0.3) and (6.95,1.4) .. (10.93,3.29)   ;

\draw [color={rgb, 255:red, 0; green, 0; blue, 0 }  ,draw opacity=1 ]   (165.5,156.04) -- (153.44,163.22) ;
\draw [shift={(151.72,164.24)}, rotate = 329.24] [color={rgb, 255:red, 0; green, 0; blue, 0 }  ,draw opacity=1 ][line width=0.75]    (10.93,-3.29) .. controls (6.95,-1.4) and (3.31,-0.3) .. (0,0) .. controls (3.31,0.3) and (6.95,1.4) .. (10.93,3.29)   ;

\draw [color={rgb, 255:red, 0; green, 0; blue, 0 }  ,draw opacity=1 ]   (167.55,136) -- (153.45,121.23) ;
\draw [shift={(152.07,119.79)}, rotate = 406.33000000000004] [color={rgb, 255:red, 0; green, 0; blue, 0 }  ,draw opacity=1 ][line width=0.75]    (10.93,-3.29) .. controls (6.95,-1.4) and (3.31,-0.3) .. (0,0) .. controls (3.31,0.3) and (6.95,1.4) .. (10.93,3.29)   ;

\draw [color={rgb, 255:red, 0; green, 0; blue, 0 }  ,draw opacity=1 ]   (486.51,231) -- (475.72,244.65) ;
\draw [shift={(474.48,246.22)}, rotate = 308.33000000000004] [color={rgb, 255:red, 0; green, 0; blue, 0 }  ,draw opacity=1 ][line width=0.75]    (10.93,-3.29) .. controls (6.95,-1.4) and (3.31,-0.3) .. (0,0) .. controls (3.31,0.3) and (6.95,1.4) .. (10.93,3.29)   ;

\draw [color={rgb, 255:red, 0; green, 0; blue, 0 }  ,draw opacity=1 ]   (505.77,231) -- (516.88,244.66) ;
\draw [shift={(518.15,246.21)}, rotate = 230.86] [color={rgb, 255:red, 0; green, 0; blue, 0 }  ,draw opacity=1 ][line width=0.75]    (10.93,-3.29) .. controls (6.95,-1.4) and (3.31,-0.3) .. (0,0) .. controls (3.31,0.3) and (6.95,1.4) .. (10.93,3.29)   ;

\draw [color={rgb, 255:red, 0; green, 0; blue, 0 }  ,draw opacity=1 ]   (487.55,63) -- (477.21,48.33) ;
\draw [shift={(476.06,46.7)}, rotate = 414.83000000000004] [color={rgb, 255:red, 0; green, 0; blue, 0 }  ,draw opacity=1 ][line width=0.75]    (10.93,-3.29) .. controls (6.95,-1.4) and (3.31,-0.3) .. (0,0) .. controls (3.31,0.3) and (6.95,1.4) .. (10.93,3.29)   ;

\draw [color={rgb, 255:red, 0; green, 0; blue, 0 }  ,draw opacity=1 ]   (506.83,63) -- (517.41,51.28) ;
\draw [shift={(518.75,49.79)}, rotate = 492.06] [color={rgb, 255:red, 0; green, 0; blue, 0 }  ,draw opacity=1 ][line width=0.75]    (10.93,-3.29) .. controls (6.95,-1.4) and (3.31,-0.3) .. (0,0) .. controls (3.31,0.3) and (6.95,1.4) .. (10.93,3.29)   ;

\end{tikzpicture}
    \caption{The ECSD $G(2n, 2n-9)$.}
    \label{1(2n,2n-9)}
\end{figure}
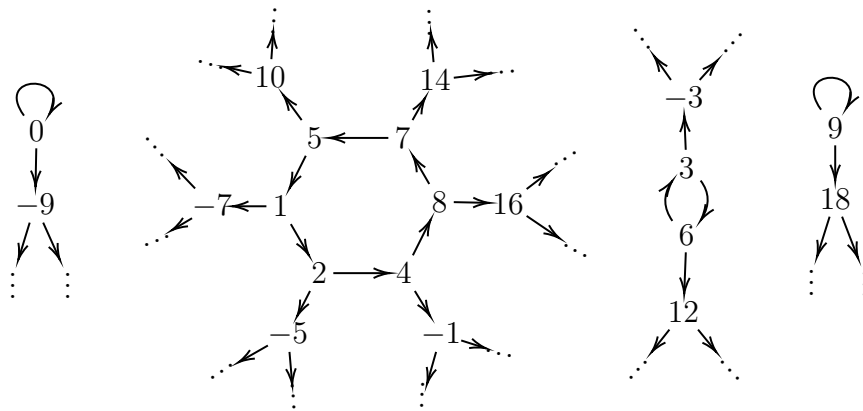

In Figure \ref{1(2n,2n-9)}, we see the ECSD corresponding to the representative set $S = \{(0,2), (-9,2)\}$. We refer to this ECSD as $G(2n, 2n-9)$ because every integer $n$ has a directed edge to $2n$ as well as to $2n-9$. All components are shown, and dotted lines indicate that the graph continues on infinitely. (Note that in this paper, we use \textit{component} to refer to a connected component in the underlying undirected graph, more precisely, a weakly connected component of the digraph.) In this figure, we can see that each $n \in \Z$ has indegree 1; in fact, this is true for each vertex in any ECSD since each integer satisfies exactly one congruence:

\begin{obs} \label{ECSDpredecessorcondition}
Each vertex of an ECSD has indegree 1. 
\end{obs}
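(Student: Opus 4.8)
The plan is to fix a vertex $m \in \Z$ and count its predecessors directly, showing there is exactly one. By definition every edge of $G_S$ has the form $(n, d_i n + a_i)$ for some $n \in \Z$ and some index $i$ with $1 \le i \le r$, so a vertex $n$ is an in-neighbor of $m$ precisely when there is an index $i$ satisfying $d_i n + a_i = m$. Thus I first want to understand, for each fixed $i$, the solution set of the equation $d_i n + a_i = m$ in the unknown $n$.

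For a fixed $i$, the equation $d_i n + a_i = m$ rearranges to $d_i n = m - a_i$. Since $d_i \ne 0$ in any covering system, this has at most one solution $n \in \Z$, and it has a solution exactly when $d_i \divides m - a_i$, i.e. when $m \equiv a_i \bmod d_i$. (With the sign convention adopted above, $d_i \divides m - a_i$ and $-d_i \divides m - a_i$ are equivalent, so the negative moduli cause no trouble here.) In that case the unique solution is $n = (m - a_i)/d_i$. Consequently the in-neighbors of $m$ are in bijection with the indices $i$ for which $m \equiv a_i \bmod d_i$ --- that is, with the congruences of $S$ that cover $m$.

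It then remains to invoke exactness. Because $S$ is an exact covering system, the integer $m$ is covered by exactly one congruence, so there is exactly one index $i$ with $m \equiv a_i \bmod d_i$, and hence exactly one in-neighbor of $m$; this gives indegree $1$. I would record one bookkeeping point: a priori two distinct covering congruences might be imagined to produce the same predecessor $n$, which one would then have to avoid double-counting. But this cannot occur, since if $n$ is an in-neighbor of $m$ via index $i$ then $m \equiv a_i \bmod d_i$, so $i$ covers $m$, and exactness makes such an $i$ unique.

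I do not expect a genuine obstacle: the statement is essentially a reformulation of the defining property of an exact covering system. The only steps demanding care are the translation of ``$n$ is an in-neighbor of $m$'' into the divisibility condition $m \equiv a_i \bmod d_i$, and keeping the sign convention for negative $d_i$ straight, so that the correspondence between edges into $m$ and congruences covering $m$ is genuinely a bijection.
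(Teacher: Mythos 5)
Your proof is correct and follows essentially the same route as the paper's: identify the in-neighbors of a vertex with the congruences of $S$ that cover it, then invoke exactness to conclude there is exactly one. You simply make explicit the steps the paper leaves implicit (the divisibility condition, the sign convention for negative moduli, and the impossibility of double-counting), which is fine but not a different argument.
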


\begin{proof}
Given an ECSD $G_S$, each $n \in \Z$ is in exactly one congruence class of the exact covering system corresponding to $S$, say, for example, we have $n \equiv a_i$ (mod $d_i$). Therefore, there is a unique $m$ such that $d_im + a_i = n$, and a single edge $(m,n)$ ending in $n$.
\end{proof}

Since every vertex has indegree 1, we can define several more terms:

\begin{defn}
In an ECSD, let the \textit{predecessor} of $n$ be the unique $P(n) \in \Z$ such that $(P(n), n)$ is an edge. Let $P^k(n) = P(P(\cdots P(n)\cdots))$, where the predecessor function is composed $k$ times.  If $m = P^k(n)$ for some $k \in \N$, we call $m$ a \textit{($k$-)ancestor} of $n$, and call $n$ a \textit{($k$-)descendant} of $m$. We call a 1-descendant of $n$ a \textit{successor} of $n$.
\end{defn}

For example, in Figure \ref{1(2n,2n-9)} we can see that the predecessor of $-5$ is 2, two visible $2$-descendants of 8 are 5 and 14, and 7 is an ancestor of $-1$. \\

\noindent
ECSDs have several interesting applications within number theory. The example that inspired the definition of ECSDs comes from the Stern sequence $s(n)$, defined recursively by 
\[s(0)=0, \; s(1)=1, \qquad s(2n)=s(n), \; s(2n+1) = s(n)+s(n+1)\] (\seqnum{A002487} in the On-Line Encyclopedia of Integer Sequences (OEIS) \cite{OEIS}). The set of non-negative integers $n$ with the property $s(n) \equiv 0$ (mod $3$) is precisely the set of non-negative integers in the component of $G(2n, 8n + 5, 8n-5, 8n + 7, 8n-7)$ containing 0 \cite{ReznickSternArticle}. Some structural results about ECSDs also imply results about non-standard digital representations of integers, as we will see in Section \ref{components}.

Previous results about similar directed graphs have been explored by Tangjai, who in her 2014 Ph.D. thesis from the University of Illinois at Urbana-Champaign \cite{Tangjai} studied primarily $G(3n, 3n+1, 3n+5)$ on the non-negative integers. She focused on finding properties of the subset of $\N_0$ that were members of the component containing 0, looking at the asymptotic density and blocks of consecutive integers contained in this subset. She considered directed graphs only on the non-negative integers, which do not have the same general structure as when they are considered on all the integers. 

Questions of the density of components of ECSDs in the integers nonetheless provide an interesting further area of study. Some ECSDs clearly have stable asymptotic density; for example, each component of the ECSD $G(dn, dn+1, \ldots, dn+d-1)$ has asymptotic density $\frac12$ because one component contains all the positive integers and the other component contains all the negative integers. However, for $d\geq 4$, the asymptotic density of the component containing 0 in the ECSD $G(dn, dn+2, dn+3, \ldots, dn+d-1, dn+d+1)$ does not exist. 

In Section \ref{deg1}, we discuss the structure of ECSDs of degree 1, as they behave very differently from ECSDs of higher degree. In Section \ref{essentialthms}, we prove several theorems that hold for all ECSDs of degree at least 2. In particular, we prove that the number of components in such an ECSD must be finite, and every component must contain exactly one cycle. We also find graph isomorphisms between different ECSDs. 

In Section \ref{components}, we connect structural results about ECSDs with non-standard representations of integers. Recall standard base $d$, or $d$-ary, representations of natural numbers: every natural number can be written as a sum
\[\sum_{j=0}^k b_jd^j\text{ with } b_j \in \{a_1, \ldots, a_d\}\] 
for some $k \in \N$ if $\{a_1, \ldots, a_d\} = \{0, \ldots, d-1\}$. Generalizing this concept to non-standard representations, we choose non-standard representatives of each congruence class modulo $d$ for $\{a_1, \ldots, a_d\}$. We show that every integer can be written uniquely as the above sum
if and only if the ECSD $G(dn+a_1,\ldots, dn+a_d)$ has a single component and 0 is an element of the cycle.

\section{ECSDs of degree 1} \label{deg1}

\noindent
First we completely characterize ECSDs of degree 1, and we shall see that they behave quite differently from ECSDs of higher degree. 
Since the only exact covering system with a single congruence is $\{\modd{0} {1}\}$, the only possible representative sets are
\[S=\{(a,1)\} \text{  or  }S=\{(a,-1)\}.\]
Thus every ECSD of degree 1 is of the form $G(n+a)$ or $G(-n+a)$ for some $a \in \Z$.
The graphs of the form $G(n+a)$ with $a \neq 0$ are the only ECSDs where each component is acyclic, and $G(n)$ and $G(-n+a)$ are the only ECSDs with infinitely many components; we shall prove both claims in Section \ref{essentialthms}.

\begin{prop}
The graph $G(n+a)$, with $a \neq 0$, is the disjoint union of $|a|$ infinite paths.  When $a=0$, the graph is the disjoint union of infinitely many loops, one at each integer. 
\end{prop}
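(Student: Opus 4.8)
The plan is to exploit the fact that a degree-$1$ ECSD has both indegree and outdegree $1$ at every vertex, so that its structure is forced to be a disjoint union of directed paths and directed cycles. For $G(n+a)$ the single outgoing edge at each $n$ is $(n,n+a)$, so the successor map is the translation $f(n)=n+a$, a bijection of $\Z$. By Observation \ref{ECSDpredecessorcondition} every vertex has indegree $1$, and by construction every vertex has outdegree $1$; thus each weakly connected component is obtained by following $f$ forward and backward from any chosen vertex, and every component is either a bi-infinite directed path or a directed cycle. The proof then splits on whether $a=0$.

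First I would handle $a\neq 0$. The forward-and-backward orbit of $n$ under $f$ is $\{n+ka : k\in\Z\}=n+a\Z$, which is exactly the coset of $a\Z$ containing $n$. These cosets are precisely the components, and there are $[\Z:a\Z]=|a|$ of them. Since $a\neq 0$, the equality $n+ka=n+\ell a$ forces $k=\ell$, so each orbit consists of distinct integers; hence no component closes up into a cycle, and each of the $|a|$ components is a doubly infinite directed path. This yields the disjoint union of $|a|$ infinite paths.

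For $a=0$ the map $f$ is the identity, so the only edge incident to $n$ is the loop $(n,n)$. Each integer therefore forms its own one-vertex component carrying a single loop, and since $\Z$ is infinite we obtain infinitely many loops, one at each integer.

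The argument reduces to a counting of cosets, so there is no serious obstacle; the one point that must not be skipped is verifying that for $a\neq 0$ no orbit is finite (equivalently, that the translation $f$ has no periodic points). This is exactly what rules out cycles in the $a\neq 0$ case and guarantees that each component is genuinely a bi-infinite path rather than a cycle.
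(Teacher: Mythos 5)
Your proof is correct and follows essentially the same approach as the paper: identifying the components of $G(n+a)$ with the congruence classes (cosets of $a\Z$), counting $|a|$ of them, and handling $a=0$ as loops at every integer. The only addition is your explicit check that translation by $a\neq 0$ has no periodic points (ruling out cycles), a step the paper treats as immediate.
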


\begin{proof} Clearly $G(n)$ is the disjoint union of infinitely many loops, because each $n\in \Z$ is sent to itself (shown in Figure \ref{G(n)}).

\begin{figure}[h]
    \centering
    \begin{tikzpicture}[x=0.75pt,y=0.75pt,yscale=-1,xscale=1]

\draw [color={rgb, 255:red, 0; green, 0; blue, 0 }  ,draw opacity=1 ]   (169.35,85) .. controls (148.77,56.58) and (200.22,56.97) .. (187.23,82.41) ;
\draw [shift={(186.35,84)}, rotate = 300.65] [color={rgb, 255:red, 0; green, 0; blue, 0 }  ,draw opacity=1 ][line width=0.75]    (10.93,-3.29) .. controls (6.95,-1.4) and (3.31,-0.3) .. (0,0) .. controls (3.31,0.3) and (6.95,1.4) .. (10.93,3.29)   ;

\draw [color={rgb, 255:red, 0; green, 0; blue, 0 }  ,draw opacity=1 ]   (223.14,85) .. controls (202.56,56.58) and (254.01,56.97) .. (241.02,82.41) ;
\draw [shift={(240.14,84)}, rotate = 300.65] [color={rgb, 255:red, 0; green, 0; blue, 0 }  ,draw opacity=1 ][line width=0.75]    (10.93,-3.29) .. controls (6.95,-1.4) and (3.31,-0.3) .. (0,0) .. controls (3.31,0.3) and (6.95,1.4) .. (10.93,3.29)   ;

\draw [color={rgb, 255:red, 0; green, 0; blue, 0 }  ,draw opacity=1 ]   (275.5,85) .. controls (254.92,56.58) and (306.37,56.97) .. (293.38,82.41) ;
\draw [shift={(292.5,84)}, rotate = 300.65] [color={rgb, 255:red, 0; green, 0; blue, 0 }  ,draw opacity=1 ][line width=0.75]    (10.93,-3.29) .. controls (6.95,-1.4) and (3.31,-0.3) .. (0,0) .. controls (3.31,0.3) and (6.95,1.4) .. (10.93,3.29)   ;

\draw [color={rgb, 255:red, 0; green, 0; blue, 0 }  ,draw opacity=1 ]   (323.5,85) .. controls (302.92,56.58) and (354.37,56.97) .. (341.38,82.41) ;
\draw [shift={(340.5,84)}, rotate = 300.65] [color={rgb, 255:red, 0; green, 0; blue, 0 }  ,draw opacity=1 ][line width=0.75]    (10.93,-3.29) .. controls (6.95,-1.4) and (3.31,-0.3) .. (0,0) .. controls (3.31,0.3) and (6.95,1.4) .. (10.93,3.29)   ;

\draw [color={rgb, 255:red, 0; green, 0; blue, 0 }  ,draw opacity=1 ]   (371.5,85) .. controls (350.92,56.58) and (402.37,56.97) .. (389.38,82.41) ;
\draw [shift={(388.5,84)}, rotate = 300.65] [color={rgb, 255:red, 0; green, 0; blue, 0 }  ,draw opacity=1 ][line width=0.75]    (10.93,-3.29) .. controls (6.95,-1.4) and (3.31,-0.3) .. (0,0) .. controls (3.31,0.3) and (6.95,1.4) .. (10.93,3.29)   ;

\draw [color={rgb, 255:red, 0; green, 0; blue, 0 }  ,draw opacity=1 ]   (417.5,85) .. controls (396.92,56.58) and (448.37,56.97) .. (435.38,82.41) ;
\draw [shift={(434.5,84)}, rotate = 300.65] [color={rgb, 255:red, 0; green, 0; blue, 0 }  ,draw opacity=1 ][line width=0.75]    (10.93,-3.29) .. controls (6.95,-1.4) and (3.31,-0.3) .. (0,0) .. controls (3.31,0.3) and (6.95,1.4) .. (10.93,3.29)   ;

\draw [color={rgb, 255:red, 0; green, 0; blue, 0 }  ,draw opacity=1 ]   (465.5,85) .. controls (444.92,56.58) and (496.37,56.97) .. (483.38,82.41) ;
\draw [shift={(482.5,84)}, rotate = 300.65] [color={rgb, 255:red, 0; green, 0; blue, 0 }  ,draw opacity=1 ][line width=0.75]    (10.93,-3.29) .. controls (6.95,-1.4) and (3.31,-0.3) .. (0,0) .. controls (3.31,0.3) and (6.95,1.4) .. (10.93,3.29)   ;

\draw (379.04,94) node    {$1$};

\draw (425.04,94) node    {$2$};

\draw (331.04,94) node    {$0$};

\draw (283.04,94) node    {$-1$};

\draw (230.68,94) node    {$-2$};

\draw (176.9,94) node    {$-3$};

\draw (473.04,94) node    {$3$};

\draw (132,94) node    {$\cdots $};

\draw (513,94) node    {$\cdots $};

\end{tikzpicture}
    \caption{The ECSD $G(n)$.}
    \label{G(n)}
\end{figure}
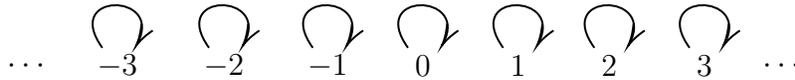

The graph $G(n+a)$ sends $n \in \Z$ to $n+a$, so the component containing $n$ is an infinite path where every element of the path is congruent to $n$ (mod $a$). Thus, there are $|a|$ disjoint infinite paths $P_i = \{i + ka \ : \ k \in \Z\}$ for $0 \leq i < |a|$. One path is shown in Figure \ref{G(n+a)}. \qedhere

\begin{figure}[h]
    \centering
    \begin{tikzpicture}[x=0.75pt,y=0.75pt,yscale=-1,xscale=1]

\draw (329.44,114) node    {$a$};

\draw (236.58,114) node    {$0$};

\draw (140.22,114) node    {$-a$};

\draw (424.8,114) node    {$2a$};

\draw (102,114) node    {$\cdots $};

\draw (190.15,114) node    {$\cdots $};

\draw (283.01,114) node    {$\cdots $};

\draw (375.87,114) node    {$\cdots $};

\draw (462,114) node    {$\cdots $};

\draw    (153.21,105.97) .. controls (176.2,88.73) and (200.31,89) .. (225.53,106.77) ;
\draw [shift={(227.07,107.88)}, rotate = 216.19] [color={rgb, 255:red, 0; green, 0; blue, 0 }  ][line width=0.75]    (10.93,-3.29) .. controls (6.95,-1.4) and (3.31,-0.3) .. (0,0) .. controls (3.31,0.3) and (6.95,1.4) .. (10.93,3.29)   ;

\draw    (246.08,107.91) .. controls (270.21,90.22) and (294.34,89.87) .. (318.46,106.85) ;
\draw [shift={(319.93,107.91)}, rotate = 216.25] [color={rgb, 255:red, 0; green, 0; blue, 0 }  ][line width=0.75]    (10.93,-3.29) .. controls (6.95,-1.4) and (3.31,-0.3) .. (0,0) .. controls (3.31,0.3) and (6.95,1.4) .. (10.93,3.29)   ;

\draw    (338.93,107.89) .. controls (363.88,89.63) and (388.02,88.83) .. (411.37,105.46) ;
\draw [shift={(412.8,106.51)}, rotate = 216.66] [color={rgb, 255:red, 0; green, 0; blue, 0 }  ][line width=0.75]    (10.93,-3.29) .. controls (6.95,-1.4) and (3.31,-0.3) .. (0,0) .. controls (3.31,0.3) and (6.95,1.4) .. (10.93,3.29)   ;

\end{tikzpicture}
    \caption{The ECSD $G(n+a)$.}
    \label{G(n+a)}
\end{figure}
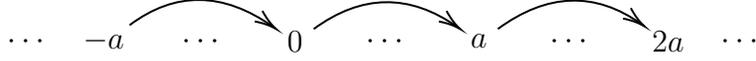
\end{proof}

\begin{prop}
The graph $G(-n+a)$ is a disjoint union of infinitely many 2-cycles covering $\Z$, with one exception being that, if $a$ is even, the graph has a loop at $\frac{1}{2}a$. If $a$ is odd, there is no such loop. Thus there are exactly two isomorphism classes of ECSDs of this type. 
\end{prop}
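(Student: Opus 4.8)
The plan is to exploit the single crucial feature of the edge map $f(n) = -n + a$: it is an \emph{involution}. First I would compute $f(f(n)) = -(-n+a)+a = n$, so that $f \circ f = \mathrm{id}_{\Z}$. In an ECSD of degree $1$ each vertex has outdegree $1$ (by definition of the degree) and indegree $1$ (Observation \ref{ECSDpredecessorcondition}), so the edge set is exactly the graph of the bijection $f$, and following edges forward is the same as iterating $f$. Because $f$ is a self-inverse bijection of $\Z$, every forward orbit closes up after at most two steps: starting from $n$ we reach $f(n)$ and then return to $n$.

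Next I would split into the two orbit types. A vertex $n$ is a fixed point of $f$ precisely when $-n+a = n$, i.e. $a = 2n$; this is solvable over $\Z$ if and only if $a$ is even, in which case the unique solution is $n = \tfrac{1}{2}a$, producing a loop there. Every non-fixed vertex $n$ satisfies $f(n) \neq n$ and $f(f(n)) = n$, so $\{n, f(n)\} = \{n, a-n\}$ carries a $2$-cycle. Thus $\Z$ is partitioned into the (at most one) fixed point together with a collection of $2$-element sets, each carrying a $2$-cycle; since $\Z$ is infinite there are infinitely many such $2$-cycles. This already yields the structural description: if $a$ is odd there is no loop and $G(-n+a)$ is a disjoint union of infinitely many $2$-cycles, while if $a$ is even there is exactly one loop at $\tfrac{1}{2}a$ in addition.

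Finally I would count isomorphism classes. Both descriptions involve countably infinitely many $2$-cycles, so for any two odd values $a, a'$ one can biject the respective families of $2$-cycles and match them vertex-for-vertex to build a digraph isomorphism; the same argument works for any two even values after first sending the unique loop to the unique loop. Hence each parity gives a single isomorphism class. The two classes are distinct because an isomorphism of digraphs must carry loops to loops, and the even graphs contain exactly one loop whereas the odd graphs contain none. Therefore there are exactly two isomorphism classes.

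The argument is essentially routine once the involution identity $f \circ f = \mathrm{id}$ is in hand; the only point requiring genuine care is the isomorphism-counting step, where I must justify that the countability of the $2$-cycle families forces all graphs of a fixed parity to be isomorphic, and that the presence or absence of the single loop is precisely the invariant separating the two classes.
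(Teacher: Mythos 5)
Your proposal is correct and follows essentially the same route as the paper: the key computation $-(-n+a)+a = n$ showing the map is an involution, the fixed-point analysis giving a loop at $\tfrac{a}{2}$ exactly when $a$ is even, and the conclusion that the parity of $a$ determines the isomorphism type. The only difference is that you spell out the isomorphism-class count (matching $2$-cycles to $2$-cycles and loops to loops, with the loop as the distinguishing invariant), a step the paper leaves implicit.
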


\begin{proof} Since $n \in \Z$ is sent to $-n+a$, and $-n+a$ is sent to $-(-n+a)+a = n$, there are infinitely many disjoint 2-cycles. If $a$ is even, then $n=-n+a$ for $n = \frac{a}{2}$. Thus, if $a$ is even there is a loop at $\frac{a}{2}$, and if $a$ is odd there is no such loop (see Figure \ref{G(-n+a)}).
\end{proof}

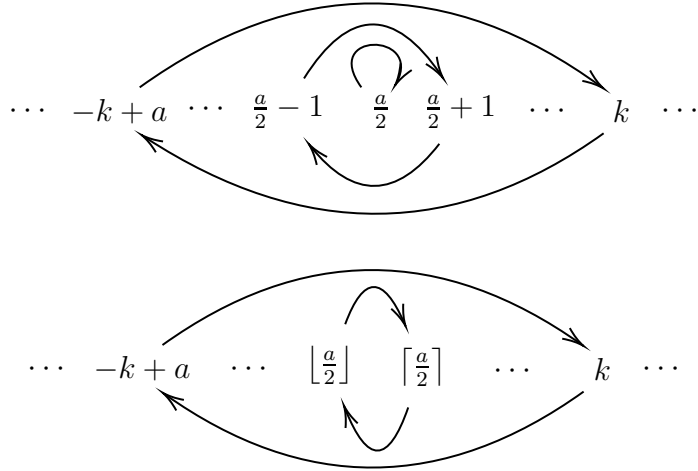
\begin{figure}[h]
    \centering
    \begin{tikzpicture}[x=0.75pt,y=0.75pt,yscale=-1,xscale=1]
    
\draw [color={rgb, 255:red, 0; green, 0; blue, 0 }  ,draw opacity=1 ]   (300.14,107) .. controls (279.56,78.58) and (331.01,78.97) .. (318.02,104.41) ;
\draw [shift={(317.14,106)}, rotate = 300.65] [color={rgb, 255:red, 0; green, 0; blue, 0 }  ,draw opacity=1 ][line width=0.75]    (10.93,-3.29) .. controls (6.95,-1.4) and (3.31,-0.3) .. (0,0) .. controls (3.31,0.3) and (6.95,1.4) .. (10.93,3.29)   ;

\draw (310,119) node    {$\frac{a}{2}$};

\draw (178.17,119) node    {$-k+a$};

\draw (133,119) node    {$\cdots $};

\draw (223,118) node    {$\cdots $};

\draw (431.31,119) node    {$k$};

\draw (394.98,119) node    {$\cdots $};

\draw (462,119) node    {$\cdots $};

\draw (350,119) node    {$\frac{a}{2} +1$};

\draw (263,119) node    {$\frac{a}{2} -1$};

\draw    (271.21,102.5) .. controls (293.73,66.71) and (316.78,66.18) .. (340.36,100.89) ;
\draw [shift={(341.44,102.5)}, rotate = 236.6] [color={rgb, 255:red, 0; green, 0; blue, 0 }  ][line width=0.75]    (10.93,-3.29) .. controls (6.95,-1.4) and (3.31,-0.3) .. (0,0) .. controls (3.31,0.3) and (6.95,1.4) .. (10.93,3.29)   ;

\draw    (339.46,135.5) .. controls (317.82,163.41) and (296.17,163.82) .. (274.53,136.76) ;
\draw [shift={(273.54,135.5)}, rotate = 412.21000000000004] [color={rgb, 255:red, 0; green, 0; blue, 0 }  ][line width=0.75]    (10.93,-3.29) .. controls (6.95,-1.4) and (3.31,-0.3) .. (0,0) .. controls (3.31,0.3) and (6.95,1.4) .. (10.93,3.29)   ;

\draw    (188.34,107) .. controls (266.79,50.61) and (344.39,50.53) .. (421.16,106.75) ;
\draw [shift={(422.32,107.6)}, rotate = 216.5] [color={rgb, 255:red, 0; green, 0; blue, 0 }  ][line width=0.75]    (10.93,-3.29) .. controls (6.95,-1.4) and (3.31,-0.3) .. (0,0) .. controls (3.31,0.3) and (6.95,1.4) .. (10.93,3.29)   ;

\draw    (191.61,132.58) .. controls (271.14,184.32) and (348.04,183.52) .. (422.32,130.19) ;
\draw [shift={(189.2,131)}, rotate = 33.56] [color={rgb, 255:red, 0; green, 0; blue, 0 }  ][line width=0.75]    (10.93,-3.29) .. controls (6.95,-1.4) and (3.31,-0.3) .. (0,0) .. controls (3.31,0.3) and (6.95,1.4) .. (10.93,3.29)   ;

\end{tikzpicture}

    \begin{tikzpicture}[x=0.75pt,y=0.75pt,yscale=-1,xscale=1]

\draw (187.17,83) node    {$-k+a$};

\draw (419.31,83) node    {$k$};

\draw (281.65,81) node    {$\left\lfloor \frac{a}{2}\right\rfloor $};

\draw (328.33,82) node    {$\left\lceil \frac{a}{2}\right\rceil $};

\draw (140,83) node    {$\cdots $};

\draw (241.99,83) node    {$\cdots $};

\draw (374.98,84) node    {$\cdots $};

\draw (450,83) node    {$\cdots $};

\draw    (321.36,102.5) .. controls (311.42,131.27) and (300.83,131.53) .. (289.58,103.27) ;
\draw [shift={(288.9,101.5)}, rotate = 429.03999999999996] [color={rgb, 255:red, 0; green, 0; blue, 0 }  ][line width=0.75]    (10.93,-3.29) .. controls (6.95,-1.4) and (3.31,-0.3) .. (0,0) .. controls (3.31,0.3) and (6.95,1.4) .. (10.93,3.29)   ;

\draw    (289.57,60.5) .. controls (298.78,35.65) and (308.72,35.47) .. (319.39,59.97) ;
\draw [shift={(320.04,61.5)}, rotate = 247.26] [color={rgb, 255:red, 0; green, 0; blue, 0 }  ][line width=0.75]    (10.93,-3.29) .. controls (6.95,-1.4) and (3.31,-0.3) .. (0,0) .. controls (3.31,0.3) and (6.95,1.4) .. (10.93,3.29)   ;

\draw    (197.42,71) .. controls (267.62,20.64) and (338.06,20.56) .. (408.75,70.73) ;
\draw [shift={(409.82,71.49)}, rotate = 215.64] [color={rgb, 255:red, 0; green, 0; blue, 0 }  ][line width=0.75]    (10.93,-3.29) .. controls (6.95,-1.4) and (3.31,-0.3) .. (0,0) .. controls (3.31,0.3) and (6.95,1.4) .. (10.93,3.29)   ;

\draw    (409.82,94.51) .. controls (339.13,145.19) and (268.68,145.61) .. (198.48,95.76) ;
\draw [shift={(197.42,95)}, rotate = 395.65] [color={rgb, 255:red, 0; green, 0; blue, 0 }  ][line width=0.75]    (10.93,-3.29) .. controls (6.95,-1.4) and (3.31,-0.3) .. (0,0) .. controls (3.31,0.3) and (6.95,1.4) .. (10.93,3.29)   ;

\end{tikzpicture}
    \caption{The ECSDs $G(-n+a)$ with $a$ even above and $a$ odd below.}
    \label{G(-n+a)}
\end{figure}

In this simple case where the degree is 1, we can already see that the signs of the $d_i$ (in this case, determining $\pm n$) matter significantly to the structure of an ECSD. This will continue to hold for ECSDs of greater degree. 

\section{Basic theorems about ECSDs}
\label{essentialthms}

For an ECSD of degree $r \geq 2$, we will prove that each of finitely many components has exactly one cycle, and that each vertex of a cycle is the root of a tree with $r-1$ successors, each of which is the root of an infinite $r$-ary tree. In order to do so, we first require a lemma.

\begin{lem} \label{ancestorlemma} Given an ECSD, for each $n \in \Z$, there is an ancestor $m$ of $n$ such that $|P(m)| \geq |m|$. Moreover, for each ECSD of degree at least 2 there exists some $N \in \N$ such that if $m \in \Z$ and $|P(m)| \geq |m|$, then $|m| \leq N$.
\end{lem}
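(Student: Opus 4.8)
The plan is to treat the two assertions separately: the first holds for every ECSD and follows from an infinite-descent argument, while the second uses the degree-$\ge 2$ hypothesis through a direct size estimate on the predecessor map.

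For the first assertion I would argue by contradiction using the well-ordering of the non-negative integers. Fix $n$ and consider its chain of iterated predecessors $n, P(n), P^2(n), \dots$. If \emph{no} ancestor $m$ of $n$ satisfied $|P(m)| \ge |m|$, then in particular $|P^{k+1}(n)| < |P^{k}(n)|$ for every $k \ge 1$, so that
\[
|P(n)| > |P^2(n)| > |P^3(n)| > \cdots
\]
would be an infinite strictly decreasing sequence of non-negative integers, which is impossible. Hence some ancestor $m = P^{k}(n)$ must satisfy $|P(m)| \ge |m|$.

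For the second assertion I would first record that when the degree is $r \ge 2$, every modulus satisfies $|d_i| \ge 2$: a congruence with $|d_i| = 1$ covers all of $\Z$, so by exactness it would be the only congruence and force $r = 1$. Next I would make $P$ explicit: if $a_i \mod d_i$ is the unique congruence covering $m$, then $m = P(m)\cdot d_i + a_i$ gives $P(m) = (m - a_i)/d_i$, hence $|P(m)| = |m - a_i|/|d_i|$. Now assume $|P(m)| \ge |m|$. Using $|d_i| \ge 2$ we get $|m - a_i| = |d_i|\,|P(m)| \ge 2|m|$, while the triangle inequality gives $|m - a_i| \le |m| + |a_i|$. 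Combining these yields $2|m| \le |m| + |a_i|$, so $|m| \le |a_i| \le \max_{1 \le i \le r} |a_i| =: N$, and taking this $N$ finishes the proof.

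The argument is elementary, so I do not expect a serious obstacle; the one point needing care is the justification that $|d_i| \ge 2$ for all $i$ when $r \ge 2$, since this is precisely where the degree hypothesis is used and, as the degree-$1$ graphs $G(n+a)$ with $a \ne 0$ already show, the second assertion genuinely fails without it.
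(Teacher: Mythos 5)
Your proof is correct. The first assertion you prove exactly as the paper does: infinite descent on the sequence $|P^k(n)|$ of non-negative integers. For the second assertion, both you and the paper start from the explicit predecessor formula $P(m) = (m-a_i)/d_i$, but the derivations of the bound genuinely differ. The paper squares the inequality $|m-a_i|/|d_i| \geq |m|$ and factors the resulting quadratic as $0 \geq ((d_i-1)m+a_i)((d_i+1)m-a_i)$, concluding that $m$ lies between the roots $-a_i/(d_i-1)$ and $a_i/(d_i+1)$, and it takes $N = \max_i \max\left\{\left|\frac{a_i}{d_i-1}\right|, \left|\frac{a_i}{d_i+1}\right|\right\}$. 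You instead combine $|d_i| \geq 2$ with the triangle inequality $|m - a_i| \leq |m| + |a_i|$ to get $|m| \leq |a_i|$ directly, so $N = \max_i |a_i|$. Your route is more elementary (no quadratic factoring, no root-locating), at the cost of a slightly weaker bound when $|d_i| \geq 3$; the two bounds coincide when $|d_i| = 2$, so your $N$ would even reproduce the paper's later computation that $N = 4$ for $G(-2n+1,-2n+4)$. A further point in your favor: you explicitly justify that $|d_i| \geq 2$ whenever $r \geq 2$ (a modulus of $\pm 1$ covers all of $\Z$, so exactness would force $r = 1$), a fact the paper asserts only parenthetically, and you correctly identify this as the precise place where the degree hypothesis enters.
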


\begin{proof} Suppose $n \in \Z$ has no such ancestor. Then $|P^{k+1}(n)| < |P^k(n)| < \cdots < |n|$ for each $k \in \N$. But there are only finitely many nonnegative numbers less than $|n|$, contradicting the assumption.

Let $m \in \Z$ be such an ancestor. Suppose the edge $(P(m), m)$ comes from the pair $(a_i, d_i)$, i.e., when $m = P(m) d_i + a_i$ (recall that $d_i$ may be negative). We therefore have the inequality $\left|\frac{m-a_i}{d_i} \right| \geq |m|$, which simplifies to $0 \geq ((d_i-1)m+a_i)((d_i+1)m-a_i)$. 
This ensures that $m$ must be between $-\frac{a_i}{d_i-1}$ and $\frac{a_i}{d_i+1}$ (when $d_i \neq \pm 1$, which is true assuming $r \geq 2$).

Let $N_i = \max\left\{\left|-\frac{a_i}{d_i-1}\right|, \left|\frac{a_i}{d_i+1}\right|\right\}$. Then $|m| \leq N_i$. Now let $N = \max_{i}N_i$, so for any $m \in \Z$ such that $|P(m)| \geq |m|$, we have $|m| \leq N$. 
\end{proof}

 From this lemma, we can conclude that the number of components of an ECSD of degree at least 2 must be finite: if each $n \in \Z$ has an ancestor in $\{m \in \Z : |m| \leq N\}$, the number of components is at most $2N+1$. Note that this is a rather poor upper bound. Although it is simple to determine the number of components for a given ECSD computationally, the number of components of a general ECSD seems to often be irregular. In future work \cite{Neidinger} we find all ECSDs of degree 2 with a single component, but the ECSD $G(3n, 3n+1, 3n-a)$ has a single component whenever $a$ is in the sequence \seqnum{A110081} in the On-Line Encyclopedia of Integer Sequences (OEIS) \cite{OEIS}, which is infinite, irregular, and has density zero. In section \ref{components} we will show why single-component ECSDs are of special interest.

Instead, we will use this lemma to prove that each component has exactly one cycle:

\begin{thm} \label{componentcycles}
Each component of an ECSD of degree at least 2 has exactly one cycle. 
\end{thm}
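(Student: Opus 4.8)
The plan is to translate the statement into the language of the predecessor function $P$ and then handle existence and uniqueness of a cycle separately. A directed cycle in $G_S$ is exactly a periodic orbit of $P$: since each vertex has indegree $1$ (Observation \ref{ECSDpredecessorcondition}), a closed walk $v_0 \to v_1 \to \cdots \to v_{k-1} \to v_0$ corresponds to $P(v_j) = v_{j-1}$ read cyclically, so it amounts to a point $v$ with $P^k(v) = v$. Thus tracing edges backward from a vertex is the same as iterating $P$, and the whole problem reduces to understanding forward $P$-orbits $n, P(n), P^2(n), \ldots$.

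For existence I would set $B = \{m \in \Z : |m| \leq N\}$, where $N$ is the bound from Lemma \ref{ancestorlemma}; here the hypothesis $r \geq 2$ is essential, since that is exactly what forces $N$ to be finite. The lemma says every integer has an ancestor in $B$, and applying it to each iterate $P^j(n)$ shows the $P$-orbit of $n$ meets the finite set $B$ at arbitrarily large indices. By pigeonhole some value $b \in B$ is attained at two distinct indices $i < j$, so $P^i(n) = P^j(n)$. Setting $v = P^i(n)$ gives $P^{j-i}(v) = v$, a periodic point of $P$ and hence a cycle; since $v$ is an ancestor of $n$, this cycle lies in the component of $n$. As $n$ was arbitrary, every component contains at least one cycle.

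For uniqueness I would use that a forward $P$-orbit is a deterministic, eventually periodic sequence, and let $\mathrm{cyc}(n)$ denote the unique cycle its tail settles into. The key observation is that $\mathrm{cyc}$ is constant along every edge: an edge has the form $(P(s), s)$, and the $P$-orbit of $P(s)$ is just the tail of the $P$-orbit of $s$, so both reach the same cycle, giving $\mathrm{cyc}(P(s)) = \mathrm{cyc}(s)$. Since a weakly connected component is generated by its edges, $\mathrm{cyc}$ is constant on each component. Finally, every vertex $v$ on a cycle $C$ satisfies $\mathrm{cyc}(v) = C$ because $P$ maps $C$ into itself, so two distinct cycles in one component would force two distinct values of $\mathrm{cyc}$ there, a contradiction. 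Combined with existence, this yields exactly one cycle per component.

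I expect the main obstacle to be the bookkeeping in the existence step: making precise that the orbit meets $B$ at arbitrarily large indices (not merely once), so that pigeonhole produces a genuine repeat and hence a nontrivial period, and confirming that the resulting periodic point really is a directed cycle under the identification above. The uniqueness step is conceptually clean once one notes that passing from $s$ to $P(s)$ only deletes the first term of an orbit, but I would still state carefully that $\mathrm{cyc}$ is well defined, i.e. that eventual periodicity of a $P$-orbit singles out one cycle.
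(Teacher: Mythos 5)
Your proposal is correct, and its existence half is exactly the paper's argument: iterate Lemma \ref{ancestorlemma} to produce infinitely many ancestors of $n$ lying in the finite set $\{m \in \Z : |m| \leq N\}$, then apply pigeonhole to get a repetition $P^i(n) = P^j(n)$ with $i < j$, so that $P^i(n)$ is periodic and $n$ descends from a cycle in its own component. Where you diverge is the uniqueness half. The paper argues directly about orientations near a cycle: since every vertex has indegree $1$ (Observation \ref{ECSDpredecessorcondition}), every edge adjacent to a cycle points away from it, and a walk leaving a cycle cannot re-enter it or meet another cycle without creating a vertex of indegree $2$; this is stated rather informally (``cannot intersect itself''). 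You instead package the determinism of $P$ into a global invariant: every $P$-orbit is eventually periodic (this follows from your existence step, since a repetition propagates forward under iteration of $P$), so the eventual cycle $\mathrm{cyc}(n)$ is well defined; it is unchanged across any edge $(P(s), s)$, hence constant on each weakly connected component, and it equals $C$ for every vertex of a cycle $C$, so one component cannot contain two distinct cycles. Both arguments rest on the same fact, but yours is the more mechanical and airtight rendering: it needs no case analysis of how two cycles might be joined by a walk, and it shows in passing that distinct cycles are vertex-disjoint. The price, which you correctly flag, is the extra verification that $\mathrm{cyc}$ is well defined, i.e.\ that eventual periodicity singles out a unique cycle as the tail of each orbit.
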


\begin{proof} 
Let $n \in \Z$. By Lemma \ref{ancestorlemma}, $n$ has some ancestor $m_1$ such that $|P(m_1)| \geq |m_1|$. Similarly, $m_1$ has such an ancestor $m_2$. Thus, we can construct an infinite sequence of ancestors $m_1, m_2, \ldots, m_k, \ldots$ of $n$ such that $|m_k| \leq N$ for each $k \in \N$. Thus, this sequence contains repetitions, and $n$ is the descendant of some cycle.

Since every vertex has indegree 1, every edge adjacent to a cycle must be oriented away from the cycle. Also, every walk that intersects a cycle is oriented away from this cycle, and cannot intersect itself, or else would violate the indegree 1 condition. Thus, there is at most one cycle per component. 
\end{proof}

We begin to see that the cycles of ECSDs are essential to their overall structure, and indeed, proving results about the cycles of ECSDs is the main goal of our future efforts. To this end, we shall study the integers in cycles of an ECSD, which we will call \textit{cyclic vertices}. We will use the notation $C = (c_1, c_2, \ldots, c_k)$ to denote a cycle of length $k$ in an ECSD, where $P(c_{i+1}) = c_i$ for each $i$, and $P(c_1) = c_k$. 

\begin{obs} \label{ancestorcyclicvx}
If a vertex $n$ is an ancestor of a cyclic vertex, then $n$ is also a cyclic vertex.
\end{obs}

\begin{proof}
All edges adjacent to a cycle must be oriented away from the cycle, and thus any ancestor of a cyclic vertex must also be on the cycle.
\end{proof}

One way we can study the structure of ECSDs is by using graph isomorphisms to conclude that many different ECSDs must all have the same structure. In fact, two ECSDs of the same degree are isomorphic if they have the same number of components, and the same number of cyclic vertices in each corresponding component. Thus, the structure of an ECSD up to graph isomorphism can be represented by its degree along with a weakly increasing list of numbers of cyclic vertices in each component. For example, the ECSD in Figure \ref{1(2n,2n-9)} could be represented by the list $[2; 1,1,2,6]$.

There are two main types of isomorphisms of covering system digraphs, both of which are automorphisms of the integers. Specifically, the isomorphisms of covering system digraphs are automorphisms of $\Z$ that respect $\Z$ as a graph, that is, the images of neighbors $n+1$ and $n-1$ of $n \in \Z$ under the automorphism are still neighbors of the image of $n$. 
The first type of ECSD isomorphism results from a shift on the integers: 

\begin{prop} \label{firstgraphisomorphism}
For each $k \in \Z$ and representative sets of exact covering systems
\begin{align*}
S &= \{(a_i, d_i) \in \Z^2 : 1 \leq i \leq r\}, \\
S'&= \{(a_i-(d_i-1)k, d_i) \in \Z^2  : 1 \leq i \leq r\},
\end{align*} 
the map $\phi_{1,k}: \Z \to \Z$ defined by $\phi_{1,k}(n) = n+k$ is a graph isomorphism between $G_S$ and $G_{S'}$.
\end{prop}

\begin{proof} 
By definition, $\phi_{1,k}$ maps $V(G_S)$ to $V(G_{S'})$:
for each edge $(n, d_in + a_i)\in E(G_S)$, we have the edge $(\phi_{1,k}(n), \phi_{1,k}(d_i n+a_i)) = (n+k, d_in + a_i + k) \in E(G_{S'})$.
If we let $m = n+k$ and $n = m-k$, this edge is $(m, d_i m + a_i - (d_i-1)k) \in E(G_{S'})$.
\end{proof}

The second type of ECSD isomorphism results from a shift and flip on the integers:

\begin{prop}\label{secondgraphisomorphism}
For each $k \in \Z$ and representative sets of exact covering systems
\begin{align*}
S &= \{(a_i, d_i) \in \Z^2 : 1 \leq i \leq r\}, \\
S'&= \{(-a_i+(d_i-1)k, d_i) \in \Z^2 : 1 \leq i \leq r\},
\end{align*} 
the map $\phi_{2,k}: \Z \to \Z$ defined by $\phi_{2,k}(n) = -(n+k)$ is a graph isomorphism between $G_S$ and $G_{S'}$.
\end{prop}

\begin{proof}
By definition, $\phi_{2,k}$ maps $V(G_S)$ to $V(G_{S'})$: 
for each edge $(n, d_i n + a_i)\in E(G_S)$, we have the edge $(\phi_{2,k}(n), \phi_{2,k}(d_i n+a_i)) = (-(n+k), -(d_i n + a_i + k)) \in E(G_{S'})$.
If we let $m= -(n+k)$ and $n = -m-k$, 
this edge is $(m, d_i m - a_i + (d_i-1)k) \in E(G_{S'})$.
\end{proof}

Note that in both of the above propositions, $S$ is an exact covering system if and only if $S'$ is also an exact covering system: for example, in Proposition \ref{firstgraphisomorphism}, if all $n \in \Z$ are covered exactly once by $S$ then all $n+k \in \Z$ are covered exactly once by $S'$.

In forthcoming work \cite{Neidinger}, we shall use these isomorphisms to classify all ECSDs of degree 2. We split the degree 2 ECSDs into three types based on the signs of their $d_i$: $G(2n+a_1, 2n+a_2)$, $G(-2n+a_1, 2n+a_2)$, and $G(-2n+a_1,-2n+a_2)$. We then use these isomorphisms to simplify each type and determine the vertices contained in cycles for each. 
For now, we turn our attention to single-component ECSDs.

\section{Single-component ECSDs and non-standard digital representations \label{components}}

In an ECSD, if $d_1=d_2=\cdots=d_r$, then its structure is closely linked to digital representations. 

Consider an ECSD $G_S$ with representative set $S = \{(a_i, d) \in \Z^2 : 1 \leq i \leq d\}$, where $d$ is a positive integer. There are $d$ pairs, each representing a different congruence class modulo $d$.
Then the set
\begin{equation} \label{k-descendant positive}
\left\{\sum_{j=0}^k b_jd^j : b_j \in \{a_1, \ldots, a_d\} \right\}
\end{equation}
is the set of all $(k+1)$-descendants of $0$ in $G_S$: each coefficient $b_j$ is a representative $a_i$ determined by the choice of each corresponding branch in the path from $0$ to the descendant.

This is more clearly illustrated in Figure \ref{0-descendants}, which shows the 1, 2, and 3-descendants of 0 in the ECSD $G(2n+a_1, 2n+a_2)$. Here, the representatives of the two binary congruence classes are $a_1$ and $a_2$. For example, if we travel from 0 through an edge $(n, 2n+a_1)$, then two edges $(n, 2n+a_2)$, we arrive at the 3-descendant 
\[2(2(2(0)+a_1)+a_2)+a_2 = 2^2a_1 + 2^1a_2 + 2^0a_2 = 4a_1 + 2a_2 + a_2.\]

\begin{figure}[h]
\centering
\begin{tikzpicture}[scale=.7]
    \small
    \centering
        \node (1) at (0,0) {$0$};
        \node (2) at (-4,-1) {$a_1$};
        \node (3) at (4,-1) {$a_2$};
        \node (4) at (-6, -2) {$2a_1+a_1$};
        \node (5) at ( -2, -2) {$2a_1+a_2$};
        \node (6) at (2,-2) {$2a_2+a_1$};
        \node (7) at (6, -2) {$2a_2+a_2$};
        
        \node (8) at (-7.5, -4) {$4a_1+2a_1+a_1$};
        \node (9) at (-5.2, -5) {$4a_1+2a_1+a_2$};
        \node (10) at (-3.5, -4) {$4a_1+2a_2+a_1$};
        \node (11) at (-1.2, -5) {$4a_1+2a_2+a_2$};
        \node (12) at (.5, -4) {$4a_2+2a_1+a_1$};
        \node (13) at (2.8, -5) {$4a_2+2a_1+a_2$};
        \node (14) at (4.5, -4) {$4a_2+2a_2+a_1$};
        \node (15) at (6.8, -5) {$4a_2+2a_2+a_2$};

        \edge{1}{2}
        \edge{1}{3}
        \edge{2}{4}
        \edge{2}{5}
        \edge{3}{6}
        \edge{3}{7}
        
        \edge{4}{8}
        \edge{4}{9}
        \edge{5}{10}
        \edge{5}{11}
        \edge{6}{12}
        \edge{6}{13}
        \edge{7}{14}
        \edge{7}{15}
        
\end{tikzpicture}
\caption{The 1, 2, and 3-descendants of 0 in $G(2n+a_1, 2n+a_2)$, with vertices possibly non-unique because of cycles.}
\label{0-descendants}
\end{figure}
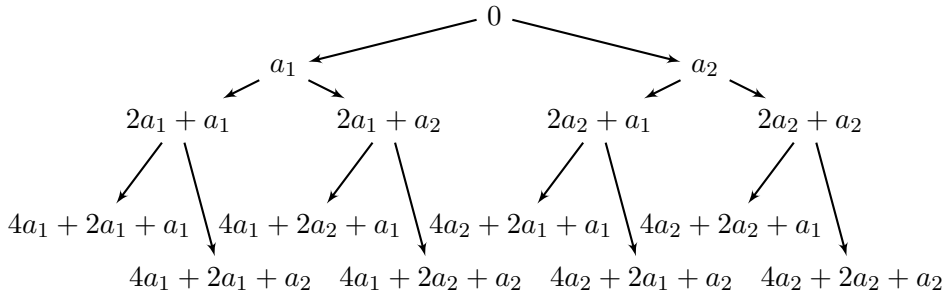

If we choose $a_1=0$ and $a_2=1$, this representation becomes the standard binary representation. The corresponding ECSD $G(2n, 2n+1)$ is shown in Figure \ref{binary}, with $(n, 2n)$ edges colored red and $(n, 2n+1)$ edges colored blue. In this case, there are two loops and two components. Here, we can see that the number 6 can be reached via two blue steps then a red step from 0. The standard binary representation of 6 is $2^2 \cdot 1 + 2^1 \cdot 1 + 2^0 \cdot 0$, written as ``110" in digital form. 

\begin{figure}[h] 
\centering
\begin{tikzpicture}[x=0.75pt,y=0.75pt,yscale=-1,xscale=1]

\draw [color={rgb, 255:red, 255; green, 0; blue, 31 }  ,draw opacity=1 ]   (73.5,26) .. controls (52.92,-2.42) and (104.37,-2.03) .. (91.38,23.41) ;
\draw [shift={(90.5,25)}, rotate = 300.65] [color={rgb, 255:red, 255; green, 0; blue, 31 }  ,draw opacity=1 ][line width=0.75]    (10.93,-3.29) .. controls (6.95,-1.4) and (3.31,-0.3) .. (0,0) .. controls (3.31,0.3) and (6.95,1.4) .. (10.93,3.29)   ;

\draw [color={rgb, 255:red, 0; green, 115; blue, 255 }  ,draw opacity=1 ]   (263,25.5) .. controls (242.42,-2.92) and (293.87,-2.53) .. (280.88,22.91) ;
\draw [shift={(280,24.5)}, rotate = 300.65] [color={rgb, 255:red, 0; green, 115; blue, 255 }  ,draw opacity=1 ][line width=0.75]    (10.93,-3.29) .. controls (6.95,-1.4) and (3.31,-0.3) .. (0,0) .. controls (3.31,0.3) and (6.95,1.4) .. (10.93,3.29)   ;

\draw (83,34) node    {$0$};

\draw (89,73) node [anchor=east] [inner sep=0.75pt]    {$1$};

\draw (42,114) node    {$2$};

\draw (122.5,114) node    {$3$};

\draw (22,153) node    {$4$};

\draw (62.5,153) node    {$5$};

\draw (102.5,153) node    {$6$};

\draw (142.5,153) node    {$7$};

\draw (11,192.5) node    {$\vdots $};

\draw (31,192.5) node    {$\vdots $};

\draw (51,193) node    {$\vdots $};

\draw (71.5,192.5) node    {$\vdots $};

\draw (93.5,193) node    {$\vdots $};

\draw (113.5,193) node    {$\vdots $};

\draw (133.5,193) node    {$\vdots $};

\draw (153.5,192.5) node    {$\vdots $};

\draw (272,34) node    {$-1$};

\draw (282.5,73) node [anchor=east] [inner sep=0.75pt]    {$-2$};

\draw (231.5,114) node    {$-3$};

\draw (312,114) node    {$-4$};

\draw (212,153) node    {$-5$};

\draw (252,153) node    {$-6$};

\draw (292,152.5) node    {$-7$};

\draw (332,153) node    {$-8$};

\draw (202.5,192.5) node    {$\vdots $};

\draw (220.5,192.5) node    {$\vdots $};

\draw (240.5,192.5) node    {$\vdots $};

\draw (261,192.5) node    {$\vdots $};

\draw (283,192.5) node    {$\vdots $};

\draw (303,192.5) node    {$\vdots $};

\draw (323,192.5) node    {$\vdots $};

\draw (343,192.5) node    {$\vdots $};

\draw [color={rgb, 255:red, 0; green, 115; blue, 255 }  ,draw opacity=1 ]   (83,46) -- (83,59) ;
\draw [shift={(83,61)}, rotate = 270] [color={rgb, 255:red, 0; green, 115; blue, 255 }  ,draw opacity=1 ][line width=0.75]    (10.93,-3.29) .. controls (6.95,-1.4) and (3.31,-0.3) .. (0,0) .. controls (3.31,0.3) and (6.95,1.4) .. (10.93,3.29)   ;

\draw [color={rgb, 255:red, 0; green, 115; blue, 255 }  ,draw opacity=1 ]   (92,82.34) -- (112.11,103.22) ;
\draw [shift={(113.5,104.66)}, rotate = 226.07] [color={rgb, 255:red, 0; green, 115; blue, 255 }  ,draw opacity=1 ][line width=0.75]    (10.93,-3.29) .. controls (6.95,-1.4) and (3.31,-0.3) .. (0,0) .. controls (3.31,0.3) and (6.95,1.4) .. (10.93,3.29)   ;

\draw [color={rgb, 255:red, 255; green, 0; blue, 31 }  ,draw opacity=1 ]   (74,82) -- (52.41,103.59) ;
\draw [shift={(51,105)}, rotate = 315] [color={rgb, 255:red, 255; green, 0; blue, 31 }  ,draw opacity=1 ][line width=0.75]    (10.93,-3.29) .. controls (6.95,-1.4) and (3.31,-0.3) .. (0,0) .. controls (3.31,0.3) and (6.95,1.4) .. (10.93,3.29)   ;

\draw [color={rgb, 255:red, 255; green, 0; blue, 31 }  ,draw opacity=1 ]   (35.85,126) -- (29.07,139.22) ;
\draw [shift={(28.15,141)}, rotate = 297.15] [color={rgb, 255:red, 255; green, 0; blue, 31 }  ,draw opacity=1 ][line width=0.75]    (10.93,-3.29) .. controls (6.95,-1.4) and (3.31,-0.3) .. (0,0) .. controls (3.31,0.3) and (6.95,1.4) .. (10.93,3.29)   ;

\draw [color={rgb, 255:red, 0; green, 115; blue, 255 }  ,draw opacity=1 ]   (48.31,126) -- (55.26,139.23) ;
\draw [shift={(56.19,141)}, rotate = 242.27] [color={rgb, 255:red, 0; green, 115; blue, 255 }  ,draw opacity=1 ][line width=0.75]    (10.93,-3.29) .. controls (6.95,-1.4) and (3.31,-0.3) .. (0,0) .. controls (3.31,0.3) and (6.95,1.4) .. (10.93,3.29)   ;

\draw [color={rgb, 255:red, 255; green, 0; blue, 31 }  ,draw opacity=1 ]   (116.35,126) -- (109.57,139.22) ;
\draw [shift={(108.65,141)}, rotate = 297.15] [color={rgb, 255:red, 255; green, 0; blue, 31 }  ,draw opacity=1 ][line width=0.75]    (10.93,-3.29) .. controls (6.95,-1.4) and (3.31,-0.3) .. (0,0) .. controls (3.31,0.3) and (6.95,1.4) .. (10.93,3.29)   ;

\draw [color={rgb, 255:red, 0; green, 115; blue, 255 }  ,draw opacity=1 ]   (128.65,126) -- (135.43,139.22) ;
\draw [shift={(136.35,141)}, rotate = 242.85] [color={rgb, 255:red, 0; green, 115; blue, 255 }  ,draw opacity=1 ][line width=0.75]    (10.93,-3.29) .. controls (6.95,-1.4) and (3.31,-0.3) .. (0,0) .. controls (3.31,0.3) and (6.95,1.4) .. (10.93,3.29)   ;

\draw [color={rgb, 255:red, 255; green, 0; blue, 31 }  ,draw opacity=1 ]   (18.66,165) -- (14.88,178.57) ;
\draw [shift={(14.34,180.5)}, rotate = 285.56] [color={rgb, 255:red, 255; green, 0; blue, 31 }  ,draw opacity=1 ][line width=0.75]    (10.93,-3.29) .. controls (6.95,-1.4) and (3.31,-0.3) .. (0,0) .. controls (3.31,0.3) and (6.95,1.4) .. (10.93,3.29)   ;

\draw [color={rgb, 255:red, 0; green, 115; blue, 255 }  ,draw opacity=1 ]   (24.73,165) -- (27.82,178.55) ;
\draw [shift={(28.27,180.5)}, rotate = 257.15999999999997] [color={rgb, 255:red, 0; green, 115; blue, 255 }  ,draw opacity=1 ][line width=0.75]    (10.93,-3.29) .. controls (6.95,-1.4) and (3.31,-0.3) .. (0,0) .. controls (3.31,0.3) and (6.95,1.4) .. (10.93,3.29)   ;

\draw [color={rgb, 255:red, 255; green, 0; blue, 31 }  ,draw opacity=1 ]   (59.05,165) -- (55,179.08) ;
\draw [shift={(54.45,181)}, rotate = 286.04] [color={rgb, 255:red, 255; green, 0; blue, 31 }  ,draw opacity=1 ][line width=0.75]    (10.93,-3.29) .. controls (6.95,-1.4) and (3.31,-0.3) .. (0,0) .. controls (3.31,0.3) and (6.95,1.4) .. (10.93,3.29)   ;

\draw [color={rgb, 255:red, 0; green, 115; blue, 255 }  ,draw opacity=1 ]   (65.23,165) -- (68.32,178.55) ;
\draw [shift={(68.77,180.5)}, rotate = 257.15999999999997] [color={rgb, 255:red, 0; green, 115; blue, 255 }  ,draw opacity=1 ][line width=0.75]    (10.93,-3.29) .. controls (6.95,-1.4) and (3.31,-0.3) .. (0,0) .. controls (3.31,0.3) and (6.95,1.4) .. (10.93,3.29)   ;

\draw [color={rgb, 255:red, 255; green, 0; blue, 31 }  ,draw opacity=1 ]   (99.8,165) -- (96.64,179.05) ;
\draw [shift={(96.2,181)}, rotate = 282.68] [color={rgb, 255:red, 255; green, 0; blue, 31 }  ,draw opacity=1 ][line width=0.75]    (10.93,-3.29) .. controls (6.95,-1.4) and (3.31,-0.3) .. (0,0) .. controls (3.31,0.3) and (6.95,1.4) .. (10.93,3.29)   ;

\draw [color={rgb, 255:red, 0; green, 115; blue, 255 }  ,draw opacity=1 ]   (105.8,165) -- (109.67,179.07) ;
\draw [shift={(110.2,181)}, rotate = 254.62] [color={rgb, 255:red, 0; green, 115; blue, 255 }  ,draw opacity=1 ][line width=0.75]    (10.93,-3.29) .. controls (6.95,-1.4) and (3.31,-0.3) .. (0,0) .. controls (3.31,0.3) and (6.95,1.4) .. (10.93,3.29)   ;

\draw [color={rgb, 255:red, 255; green, 0; blue, 31 }  ,draw opacity=1 ]   (139.8,165) -- (136.64,179.05) ;
\draw [shift={(136.2,181)}, rotate = 282.68] [color={rgb, 255:red, 255; green, 0; blue, 31 }  ,draw opacity=1 ][line width=0.75]    (10.93,-3.29) .. controls (6.95,-1.4) and (3.31,-0.3) .. (0,0) .. controls (3.31,0.3) and (6.95,1.4) .. (10.93,3.29)   ;

\draw [color={rgb, 255:red, 0; green, 115; blue, 255 }  ,draw opacity=1 ]   (145.84,165) -- (149.62,178.57) ;
\draw [shift={(150.16,180.5)}, rotate = 254.44] [color={rgb, 255:red, 0; green, 115; blue, 255 }  ,draw opacity=1 ][line width=0.75]    (10.93,-3.29) .. controls (6.95,-1.4) and (3.31,-0.3) .. (0,0) .. controls (3.31,0.3) and (6.95,1.4) .. (10.93,3.29)   ;

\draw [color={rgb, 255:red, 255; green, 0; blue, 31 }  ,draw opacity=1 ]   (272,46) -- (272,59) ;
\draw [shift={(272,61)}, rotate = 270] [color={rgb, 255:red, 255; green, 0; blue, 31 }  ,draw opacity=1 ][line width=0.75]    (10.93,-3.29) .. controls (6.95,-1.4) and (3.31,-0.3) .. (0,0) .. controls (3.31,0.3) and (6.95,1.4) .. (10.93,3.29)   ;

\draw [color={rgb, 255:red, 255; green, 0; blue, 31 }  ,draw opacity=1 ]   (283.71,85) -- (298.9,100.57) ;
\draw [shift={(300.29,102)}, rotate = 225.71] [color={rgb, 255:red, 255; green, 0; blue, 31 }  ,draw opacity=1 ][line width=0.75]    (10.93,-3.29) .. controls (6.95,-1.4) and (3.31,-0.3) .. (0,0) .. controls (3.31,0.3) and (6.95,1.4) .. (10.93,3.29)   ;

\draw [color={rgb, 255:red, 0; green, 115; blue, 255 }  ,draw opacity=1 ]   (260.15,85) -- (244.76,100.58) ;
\draw [shift={(243.35,102)}, rotate = 314.65] [color={rgb, 255:red, 0; green, 115; blue, 255 }  ,draw opacity=1 ][line width=0.75]    (10.93,-3.29) .. controls (6.95,-1.4) and (3.31,-0.3) .. (0,0) .. controls (3.31,0.3) and (6.95,1.4) .. (10.93,3.29)   ;

\draw [color={rgb, 255:red, 0; green, 115; blue, 255 }  ,draw opacity=1 ]   (225.5,126) -- (218.89,139.21) ;
\draw [shift={(218,141)}, rotate = 296.57] [color={rgb, 255:red, 0; green, 115; blue, 255 }  ,draw opacity=1 ][line width=0.75]    (10.93,-3.29) .. controls (6.95,-1.4) and (3.31,-0.3) .. (0,0) .. controls (3.31,0.3) and (6.95,1.4) .. (10.93,3.29)   ;

\draw [color={rgb, 255:red, 255; green, 0; blue, 31 }  ,draw opacity=1 ]   (237.81,126) -- (244.76,139.23) ;
\draw [shift={(245.69,141)}, rotate = 242.27] [color={rgb, 255:red, 255; green, 0; blue, 31 }  ,draw opacity=1 ][line width=0.75]    (10.93,-3.29) .. controls (6.95,-1.4) and (3.31,-0.3) .. (0,0) .. controls (3.31,0.3) and (6.95,1.4) .. (10.93,3.29)   ;

\draw [color={rgb, 255:red, 0; green, 115; blue, 255 }  ,draw opacity=1 ]   (305.77,126) -- (299.16,138.73) ;
\draw [shift={(298.23,140.5)}, rotate = 297.45] [color={rgb, 255:red, 0; green, 115; blue, 255 }  ,draw opacity=1 ][line width=0.75]    (10.93,-3.29) .. controls (6.95,-1.4) and (3.31,-0.3) .. (0,0) .. controls (3.31,0.3) and (6.95,1.4) .. (10.93,3.29)   ;

\draw [color={rgb, 255:red, 255; green, 0; blue, 31 }  ,draw opacity=1 ]   (318.15,126) -- (324.93,139.22) ;
\draw [shift={(325.85,141)}, rotate = 242.85] [color={rgb, 255:red, 255; green, 0; blue, 31 }  ,draw opacity=1 ][line width=0.75]    (10.93,-3.29) .. controls (6.95,-1.4) and (3.31,-0.3) .. (0,0) .. controls (3.31,0.3) and (6.95,1.4) .. (10.93,3.29)   ;

\draw [color={rgb, 255:red, 0; green, 115; blue, 255 }  ,draw opacity=1 ]   (209.11,165) -- (205.85,178.56) ;
\draw [shift={(205.39,180.5)}, rotate = 283.52] [color={rgb, 255:red, 0; green, 115; blue, 255 }  ,draw opacity=1 ][line width=0.75]    (10.93,-3.29) .. controls (6.95,-1.4) and (3.31,-0.3) .. (0,0) .. controls (3.31,0.3) and (6.95,1.4) .. (10.93,3.29)   ;

\draw [color={rgb, 255:red, 255; green, 0; blue, 31 }  ,draw opacity=1 ]   (214.58,165) -- (217.5,178.54) ;
\draw [shift={(217.92,180.5)}, rotate = 257.86] [color={rgb, 255:red, 255; green, 0; blue, 31 }  ,draw opacity=1 ][line width=0.75]    (10.93,-3.29) .. controls (6.95,-1.4) and (3.31,-0.3) .. (0,0) .. controls (3.31,0.3) and (6.95,1.4) .. (10.93,3.29)   ;

\draw [color={rgb, 255:red, 0; green, 115; blue, 255 }  ,draw opacity=1 ]   (248.51,165) -- (244.55,178.58) ;
\draw [shift={(243.99,180.5)}, rotate = 286.23] [color={rgb, 255:red, 0; green, 115; blue, 255 }  ,draw opacity=1 ][line width=0.75]    (10.93,-3.29) .. controls (6.95,-1.4) and (3.31,-0.3) .. (0,0) .. controls (3.31,0.3) and (6.95,1.4) .. (10.93,3.29)   ;

\draw [color={rgb, 255:red, 255; green, 0; blue, 31 }  ,draw opacity=1 ]   (254.73,165) -- (257.82,178.55) ;
\draw [shift={(258.27,180.5)}, rotate = 257.15999999999997] [color={rgb, 255:red, 255; green, 0; blue, 31 }  ,draw opacity=1 ][line width=0.75]    (10.93,-3.29) .. controls (6.95,-1.4) and (3.31,-0.3) .. (0,0) .. controls (3.31,0.3) and (6.95,1.4) .. (10.93,3.29)   ;

\draw [color={rgb, 255:red, 0; green, 115; blue, 255 }  ,draw opacity=1 ]   (289.3,164.5) -- (286.14,178.55) ;
\draw [shift={(285.7,180.5)}, rotate = 282.68] [color={rgb, 255:red, 0; green, 115; blue, 255 }  ,draw opacity=1 ][line width=0.75]    (10.93,-3.29) .. controls (6.95,-1.4) and (3.31,-0.3) .. (0,0) .. controls (3.31,0.3) and (6.95,1.4) .. (10.93,3.29)   ;

\draw [color={rgb, 255:red, 255; green, 0; blue, 31 }  ,draw opacity=1 ]   (295.3,164.5) -- (299.17,178.57) ;
\draw [shift={(299.7,180.5)}, rotate = 254.62] [color={rgb, 255:red, 255; green, 0; blue, 31 }  ,draw opacity=1 ][line width=0.75]    (10.93,-3.29) .. controls (6.95,-1.4) and (3.31,-0.3) .. (0,0) .. controls (3.31,0.3) and (6.95,1.4) .. (10.93,3.29)   ;

\draw [color={rgb, 255:red, 0; green, 115; blue, 255 }  ,draw opacity=1 ]   (329.27,165) -- (326.18,178.55) ;
\draw [shift={(325.73,180.5)}, rotate = 282.84000000000003] [color={rgb, 255:red, 0; green, 115; blue, 255 }  ,draw opacity=1 ][line width=0.75]    (10.93,-3.29) .. controls (6.95,-1.4) and (3.31,-0.3) .. (0,0) .. controls (3.31,0.3) and (6.95,1.4) .. (10.93,3.29)   ;

\draw [color={rgb, 255:red, 255; green, 0; blue, 31 }  ,draw opacity=1 ]   (335.34,165) -- (339.12,178.57) ;
\draw [shift={(339.66,180.5)}, rotate = 254.44] [color={rgb, 255:red, 255; green, 0; blue, 31 }  ,draw opacity=1 ][line width=0.75]    (10.93,-3.29) .. controls (6.95,-1.4) and (3.31,-0.3) .. (0,0) .. controls (3.31,0.3) and (6.95,1.4) .. (10.93,3.29)   ;

\end{tikzpicture}
\caption{The ECSD $G(2n, 2n+1)$ with $(n, 2n)$ edges colored red and $(n, 2n+1)$ edges colored blue.}
\label{binary}
\end{figure}
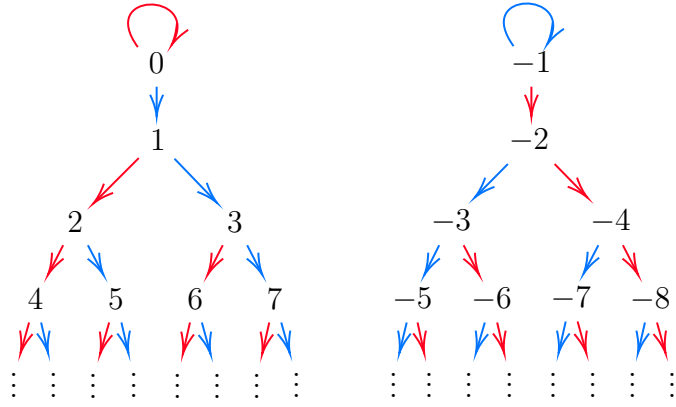

Thus, one can see that the component of $G_S$ containing 0, assuming that 0 is a cyclic vertex, is the set of integers which for some $k \in \N$ can be expressed as the sum in Equation \ref{k-descendant positive} (as every vertex $m$ in the component containing 0 will have a walk from $0$ to $m$). For the standard binary representation, we can see from Figure \ref{binary} that every non-negative integer has such a representation. The same is true for any standard $d$-ary representation (i.e., with representatives $0,1,\ldots,d-1$). 

Through non-standard choices of representatives, we are able to create different non-standard representations. Moreover, we have not restricted ourselves to positive moduli, so we may also consider ECSDs with modulus representative $-d$ for some integer $d >0$, i.e., the ECSD $G_{S'}$ with $S' = \{(a_i, -d) \in \Z^2 : 1 \leq i \leq d\}$. In this case, the $(k+1)$-descendants of 0 in $G_{S'}$ are

\begin{equation} \label{k-descendants negative}
\left\{\sum_{j=0}^k b_j(-d)^j : b_j \in \{a_1, \ldots, a_d\}\right\}.
\end{equation}

Although perhaps not immediately familiar, this leads to interesting digital representations. For base $-2$, for example, we have coefficients not of powers of 2, but of powers of $-2$: 1, $-2$, 4, $-8$, etc. 
While we can represent every non-negative integer with a standard $d$-ary representation, we cannot represent any of the negative integers. Using non-standard representations, we can. It is the case that representations which can represent every integer are given by one-component ECSDs:

\begin{thm}\label{integerrepresentation}
Let $G_S$ be an ECSD with representative set $S = \{(a_i, D) \in \Z^2 : 1 \leq i \leq d\}$, the base $D = \pm d$, and $d \geq 2 \in \Z$. Then every integer $n$ can be expressed as the sum 
\begin{equation} \label{representationsum}
    \sum_{j=0}^k b_jD^j : b_j \in \{a_1, \ldots, a_d\}
\end{equation}
for some $k \in \N$ if and only if $G_S$ is connected and its single cycle contains 0. This representation is unique up to leading instances of the block of digits representing the path from 0 to itself in $G_S$. 
\end{thm}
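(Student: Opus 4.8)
The plan is to reduce the whole statement to a question about reachability in $G_S$. As observed just before the theorem, for each $k$ the integers expressible as $\sum_{j=0}^{k} b_j D^j$ with $b_j \in \{a_1,\dots,a_d\}$ are exactly the $(k+1)$-descendants of $0$ (Equations (\ref{k-descendant positive}) and (\ref{k-descendants negative})). Hence an integer $n$ is representable in the form (\ref{representationsum}) for some $k \in \N$ if and only if there is a directed walk of positive length from $0$ to $n$, i.e. $n$ is a descendant of $0$. So the biconditional I must prove becomes: \emph{every integer is a descendant of $0$} if and only if \emph{$G_S$ is connected and its unique cycle (Theorem \ref{componentcycles}) contains $0$}. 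I would carry out the entire argument in these reachability terms and only translate back into digit strings at the very end, for the uniqueness clause.

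For the reverse direction, assume $G_S$ is connected with $0$ on its cycle $C=(c_1,\dots,c_\ell)$, where $c_1=0$. Every cyclic vertex is reached from $0$ by traversing $C$, so in particular $0$ itself is reached by a walk of length $\ell\ge 1$. For a non-cyclic vertex $m$, following predecessors $m, P(m), P^2(m),\dots$ must reach the cycle (exactly the mechanism used in the proof of Theorem \ref{componentcycles}), producing a cyclic ancestor $c$ of $m$; concatenating a walk $0\to c$ around $C$ with the walk $c\to m$ yields a walk from $0$ to $m$. Thus every integer is a descendant of $0$. For the forward direction I would argue by contraposition. If $G_S$ is disconnected, a vertex in another component from $0$ is unreachable and hence not representable. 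If $G_S$ is connected but $0$ is not on the cycle, then $0$ is a tree vertex; its predecessor $P(0)$ exists and is unique by Observation \ref{ECSDpredecessorcondition}, and a walk from $0$ back to $P(0)$ would close into a cycle through $0$, contradicting Observation \ref{ancestorcyclicvx}. Hence $P(0)$ is a strict ancestor of $0$ but not a descendant of $0$, so it is not representable.

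For uniqueness I would use the indegree-$1$ condition (Observation \ref{ECSDpredecessorcondition}) decisively: the backward orbit $n, P(n), P^2(n),\dots$ is a single, fully determined path, so a walk from $0$ to $n$ of length $L$ exists precisely when $P^L(n)=0$, and distinct such $L$ give distinct walks, hence distinct representations with $k=L-1$. Since $0$ lies on the cycle of length $\ell$, the set of positive integers $L$ with $P^L(n)=0$ is an arithmetic progression $L_0, L_0+\ell, L_0+2\ell,\dots$, because once the backward orbit first meets $0$ at step $L_0$ it returns to $0$ exactly every $\ell$ steps. Passing from the walk of length $L_0+t\ell$ to the one of length $L_0+(t{+}1)\ell$ prepends one full loop around $C$ at the \emph{start} of the walk, which—because the first edge taken contributes the highest-order coefficient, as in Figure \ref{0-descendants}—prepends a fixed block of $\ell$ high-order digits, namely the block coding the path from $0$ to itself. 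This gives exactly the claimed uniqueness up to leading instances of that block.

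The main obstacle I anticipate is not the reachability dichotomy, which follows cleanly from the one-cycle structure, but the bookkeeping in the uniqueness clause: one must pin down the digit order (first edge taken $\leftrightarrow$ leading coefficient $b_k$) and verify that ``going around the cycle once more'' corresponds to \emph{prepending}, rather than appending or inserting, the fixed block, and that indegree $1$ genuinely rules out any walk from $0$ to $n$ beyond those enumerated by $P^L(n)=0$. A secondary subtlety worth stating explicitly is the representability of $0$ itself and of the cyclic vertices: each requires a closed walk of positive length, which is precisely what forces $0$ onto the cycle in the forward direction.
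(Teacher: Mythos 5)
Your proof is correct and follows essentially the same route as the paper: both reduce representability to being a descendant of $0$ via the walk--digit correspondence, settle the biconditional using the one-cycle-plus-trees structure of a connected ECSD, and explain uniqueness by walks from $0$ to $n$ differing only in initial loops around the cycle, which prepend the digit block encoding the path from $0$ to itself. Your write-up is in fact more rigorous than the paper's own terse argument—in particular, using indegree $1$ to parametrize all walks from $0$ to $n$ by the lengths $L$ with $P^L(n)=0$, and showing these form an arithmetic progression with common difference the cycle length, makes precise what the paper only sketches.
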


\begin{proof}
As in the previous discussion, the set of descendants of 0 in $G_S$ is the set of $n \in \Z$ that can be expressed as (\ref{representationsum}) for some $k \in \N$. Thus, if there is only one cycle (and therefore only one component by Proposition \ref{componentcycles}) and 0 is a cyclic vertex, every integer must be a descendant of 0 and can be expressed as (\ref{representationsum}) for some $k \in \N$. If $G_S$ has more than one component or if $0$ is not a cyclic vertex, then some integers are not descendants of 0 and hence do not have such a representation.

Because the structure of each component of an ECSD is a single base cycle and infinite trees at each cyclic vertex, a walk from $0$ to $n \in \Z$ will become unique once the path leaves the cycle. Since representations correspond to walks from $0$ to $n$, the only difference may be an arbitrary number of walks around the cycle which correspond to leading digits. 
\end{proof}

We are used to digital representations being unique up to leading 0 digits; for example, in the standard binary representation, we would say ``00011" = ``11". If 0 is the only cyclic vertex of an ECSD, then the cycle is a loop at 0, so the path from 0 to itself is one edge with corresponding digit 0. Thus, in this case we ignore leading zeroes as usual. 

\begin{figure}[h]
    \centering
    \begin{tikzpicture}[x=0.75pt,y=0.75pt,yscale=-1,xscale=1]

\draw (284,137) node    {$1$};

\draw (337,137) node    {$2$};

\draw (309,177) node    {$0$};

\draw (309,222) node    {$4$};

\draw (248,106) node    {$-1$};

\draw (373,108) node    {$-3$};

\draw (278,259) node    {$-7$};

\draw (337,257) node    {$-4$};

\draw (200,105) node    {$3$};

\draw (247,62) node    {$6$};

\draw (385,64) node    {$7$};

\draw (417,105) node    {$10$};

\draw (226,293) node  [rotate=-39.17]  {$\vdots $};

\draw (280,306) node  [rotate=-180.22]  {$\vdots $};

\draw (333,303) node  [rotate=-4.36]  {$\vdots $};

\draw (383,287) node  [rotate=-125.5]  {$\vdots $};

\draw (460,122) node  [rotate=-113.78]  {$\vdots $};

\draw (459,80) node  [rotate=-64.5]  {$\vdots $};

\draw (432,52) node  [rotate=-72.1]  {$\vdots $};

\draw (385,23) node  [rotate=-4.36]  {$\vdots $};

\draw (280,23) node  [rotate=-30.51]  {$\vdots $};

\draw (212,26) node  [rotate=-320.33]  {$\vdots $};

\draw (174,65) node  [rotate=-322.2]  {$\vdots $};

\draw (157,114) node  [rotate=-74.37]  {$\vdots $};

\draw [color={rgb, 255:red, 255; green, 0; blue, 31 }  ,draw opacity=1 ]   (293.5,137) -- (325.5,137) ;
\draw [shift={(327.5,137)}, rotate = 180] [color={rgb, 255:red, 255; green, 0; blue, 31 }  ,draw opacity=1 ][line width=0.75]    (10.93,-3.29) .. controls (6.95,-1.4) and (3.31,-0.3) .. (0,0) .. controls (3.31,0.3) and (6.95,1.4) .. (10.93,3.29)   ;

\draw [color={rgb, 255:red, 255; green, 0; blue, 31 }  ,draw opacity=1 ]   (328.6,149) -- (318.55,163.36) ;
\draw [shift={(317.4,165)}, rotate = 304.99] [color={rgb, 255:red, 255; green, 0; blue, 31 }  ,draw opacity=1 ][line width=0.75]    (10.93,-3.29) .. controls (6.95,-1.4) and (3.31,-0.3) .. (0,0) .. controls (3.31,0.3) and (6.95,1.4) .. (10.93,3.29)   ;

\draw [color={rgb, 255:red, 0; green, 115; blue, 255 }  ,draw opacity=1 ]   (301.5,165) -- (292.56,150.7) ;
\draw [shift={(291.5,149)}, rotate = 417.99] [color={rgb, 255:red, 0; green, 115; blue, 255 }  ,draw opacity=1 ][line width=0.75]    (10.93,-3.29) .. controls (6.95,-1.4) and (3.31,-0.3) .. (0,0) .. controls (3.31,0.3) and (6.95,1.4) .. (10.93,3.29)   ;

\draw [color={rgb, 255:red, 0; green, 115; blue, 255 }  ,draw opacity=1 ]   (274.5,128.82) -- (263.02,118.93) ;
\draw [shift={(261.5,117.63)}, rotate = 400.73] [color={rgb, 255:red, 0; green, 115; blue, 255 }  ,draw opacity=1 ][line width=0.75]    (10.93,-3.29) .. controls (6.95,-1.4) and (3.31,-0.3) .. (0,0) .. controls (3.31,0.3) and (6.95,1.4) .. (10.93,3.29)   ;

\draw [color={rgb, 255:red, 0; green, 115; blue, 255 }  ,draw opacity=1 ]   (346.5,129.35) -- (357.94,120.13) ;
\draw [shift={(359.5,118.88)}, rotate = 501.15] [color={rgb, 255:red, 0; green, 115; blue, 255 }  ,draw opacity=1 ][line width=0.75]    (10.93,-3.29) .. controls (6.95,-1.4) and (3.31,-0.3) .. (0,0) .. controls (3.31,0.3) and (6.95,1.4) .. (10.93,3.29)   ;

\draw [color={rgb, 255:red, 255; green, 0; blue, 31 }  ,draw opacity=1 ]   (309,189) -- (309,208) ;
\draw [shift={(309,210)}, rotate = 270] [color={rgb, 255:red, 255; green, 0; blue, 31 }  ,draw opacity=1 ][line width=0.75]    (10.93,-3.29) .. controls (6.95,-1.4) and (3.31,-0.3) .. (0,0) .. controls (3.31,0.3) and (6.95,1.4) .. (10.93,3.29)   ;

\draw [color={rgb, 255:red, 255; green, 0; blue, 31 }  ,draw opacity=1 ]   (318.5,233.88) -- (326.15,243.44) ;
\draw [shift={(327.4,245)}, rotate = 231.34] [color={rgb, 255:red, 255; green, 0; blue, 31 }  ,draw opacity=1 ][line width=0.75]    (10.93,-3.29) .. controls (6.95,-1.4) and (3.31,-0.3) .. (0,0) .. controls (3.31,0.3) and (6.95,1.4) .. (10.93,3.29)   ;

\draw [color={rgb, 255:red, 0; green, 115; blue, 255 }  ,draw opacity=1 ]   (299.5,233.34) -- (289.34,245.47) ;
\draw [shift={(288.05,247)}, rotate = 309.96000000000004] [color={rgb, 255:red, 0; green, 115; blue, 255 }  ,draw opacity=1 ][line width=0.75]    (10.93,-3.29) .. controls (6.95,-1.4) and (3.31,-0.3) .. (0,0) .. controls (3.31,0.3) and (6.95,1.4) .. (10.93,3.29)   ;

\draw [color={rgb, 255:red, 255; green, 0; blue, 31 }  ,draw opacity=1 ]   (247.73,94) -- (247.32,76) ;
\draw [shift={(247.27,74)}, rotate = 448.7] [color={rgb, 255:red, 255; green, 0; blue, 31 }  ,draw opacity=1 ][line width=0.75]    (10.93,-3.29) .. controls (6.95,-1.4) and (3.31,-0.3) .. (0,0) .. controls (3.31,0.3) and (6.95,1.4) .. (10.93,3.29)   ;

\draw [color={rgb, 255:red, 0; green, 115; blue, 255 }  ,draw opacity=1 ]   (234.5,105.72) -- (211.5,105.24) ;
\draw [shift={(209.5,105.2)}, rotate = 361.19] [color={rgb, 255:red, 0; green, 115; blue, 255 }  ,draw opacity=1 ][line width=0.75]    (10.93,-3.29) .. controls (6.95,-1.4) and (3.31,-0.3) .. (0,0) .. controls (3.31,0.3) and (6.95,1.4) .. (10.93,3.29)   ;

\draw [color={rgb, 255:red, 255; green, 0; blue, 31 }  ,draw opacity=1 ]   (386.5,107.08) -- (402.5,105.99) ;
\draw [shift={(404.5,105.85)}, rotate = 536.1] [color={rgb, 255:red, 255; green, 0; blue, 31 }  ,draw opacity=1 ][line width=0.75]    (10.93,-3.29) .. controls (6.95,-1.4) and (3.31,-0.3) .. (0,0) .. controls (3.31,0.3) and (6.95,1.4) .. (10.93,3.29)   ;

\draw [color={rgb, 255:red, 0; green, 115; blue, 255 }  ,draw opacity=1 ]   (376.27,96) -- (381.2,77.93) ;
\draw [shift={(381.73,76)}, rotate = 465.26] [color={rgb, 255:red, 0; green, 115; blue, 255 }  ,draw opacity=1 ][line width=0.75]    (10.93,-3.29) .. controls (6.95,-1.4) and (3.31,-0.3) .. (0,0) .. controls (3.31,0.3) and (6.95,1.4) .. (10.93,3.29)   ;

\draw [color={rgb, 255:red, 255; green, 0; blue, 31 }  ,draw opacity=1 ]   (264.5,267.83) -- (243.36,281.65) ;
\draw [shift={(241.68,282.75)}, rotate = 326.82] [color={rgb, 255:red, 255; green, 0; blue, 31 }  ,draw opacity=1 ][line width=0.75]    (10.93,-3.29) .. controls (6.95,-1.4) and (3.31,-0.3) .. (0,0) .. controls (3.31,0.3) and (6.95,1.4) .. (10.93,3.29)   ;

\draw [color={rgb, 255:red, 0; green, 115; blue, 255 }  ,draw opacity=1 ]   (278.51,271) -- (279.4,291.97) ;
\draw [shift={(279.49,293.96)}, rotate = 267.56] [color={rgb, 255:red, 0; green, 115; blue, 255 }  ,draw opacity=1 ][line width=0.75]    (10.93,-3.29) .. controls (6.95,-1.4) and (3.31,-0.3) .. (0,0) .. controls (3.31,0.3) and (6.95,1.4) .. (10.93,3.29)   ;

\draw [color={rgb, 255:red, 255; green, 0; blue, 31 }  ,draw opacity=1 ]   (335.96,269) -- (334.28,288.31) ;
\draw [shift={(334.1,290.31)}, rotate = 274.97] [color={rgb, 255:red, 255; green, 0; blue, 31 }  ,draw opacity=1 ][line width=0.75]    (10.93,-3.29) .. controls (6.95,-1.4) and (3.31,-0.3) .. (0,0) .. controls (3.31,0.3) and (6.95,1.4) .. (10.93,3.29)   ;

\draw [color={rgb, 255:red, 0; green, 115; blue, 255 }  ,draw opacity=1 ]   (350.5,265.8) -- (365.67,275.7) ;
\draw [shift={(367.34,276.79)}, rotate = 213.11] [color={rgb, 255:red, 0; green, 115; blue, 255 }  ,draw opacity=1 ][line width=0.75]    (10.93,-3.29) .. controls (6.95,-1.4) and (3.31,-0.3) .. (0,0) .. controls (3.31,0.3) and (6.95,1.4) .. (10.93,3.29)   ;

\draw [color={rgb, 255:red, 255; green, 0; blue, 31 }  ,draw opacity=1 ]   (429.5,109.94) -- (443.16,115.34) ;
\draw [shift={(445.02,116.08)}, rotate = 201.57] [color={rgb, 255:red, 255; green, 0; blue, 31 }  ,draw opacity=1 ][line width=0.75]    (10.93,-3.29) .. controls (6.95,-1.4) and (3.31,-0.3) .. (0,0) .. controls (3.31,0.3) and (6.95,1.4) .. (10.93,3.29)   ;

\draw [color={rgb, 255:red, 0; green, 115; blue, 255 }  ,draw opacity=1 ]   (429.5,97.56) -- (442.17,90.02) ;
\draw [shift={(443.88,89)}, rotate = 509.24] [color={rgb, 255:red, 0; green, 115; blue, 255 }  ,draw opacity=1 ][line width=0.75]    (10.93,-3.29) .. controls (6.95,-1.4) and (3.31,-0.3) .. (0,0) .. controls (3.31,0.3) and (6.95,1.4) .. (10.93,3.29)   ;

\draw [color={rgb, 255:red, 255; green, 0; blue, 31 }  ,draw opacity=1 ]   (394.5,61.57) -- (415.63,56.18) ;
\draw [shift={(417.56,55.69)}, rotate = 525.6800000000001] [color={rgb, 255:red, 255; green, 0; blue, 31 }  ,draw opacity=1 ][line width=0.75]    (10.93,-3.29) .. controls (6.95,-1.4) and (3.31,-0.3) .. (0,0) .. controls (3.31,0.3) and (6.95,1.4) .. (10.93,3.29)   ;

\draw [color={rgb, 255:red, 0; green, 115; blue, 255 }  ,draw opacity=1 ]   (385,52) -- (385,37.69) ;
\draw [shift={(385,35.69)}, rotate = 450] [color={rgb, 255:red, 0; green, 115; blue, 255 }  ,draw opacity=1 ][line width=0.75]    (10.93,-3.29) .. controls (6.95,-1.4) and (3.31,-0.3) .. (0,0) .. controls (3.31,0.3) and (6.95,1.4) .. (10.93,3.29)   ;

\draw [color={rgb, 255:red, 255; green, 0; blue, 31 }  ,draw opacity=1 ]   (256.5,50.77) -- (265.64,39.97) ;
\draw [shift={(266.94,38.44)}, rotate = 490.24] [color={rgb, 255:red, 255; green, 0; blue, 31 }  ,draw opacity=1 ][line width=0.75]    (10.93,-3.29) .. controls (6.95,-1.4) and (3.31,-0.3) .. (0,0) .. controls (3.31,0.3) and (6.95,1.4) .. (10.93,3.29)   ;

\draw [color={rgb, 255:red, 0; green, 115; blue, 255 }  ,draw opacity=1 ]   (237.5,52.23) -- (228.72,43.2) ;
\draw [shift={(227.32,41.76)}, rotate = 405.81] [color={rgb, 255:red, 0; green, 115; blue, 255 }  ,draw opacity=1 ][line width=0.75]    (10.93,-3.29) .. controls (6.95,-1.4) and (3.31,-0.3) .. (0,0) .. controls (3.31,0.3) and (6.95,1.4) .. (10.93,3.29)   ;

\draw [color={rgb, 255:red, 255; green, 0; blue, 31 }  ,draw opacity=1 ]   (192.2,93) -- (185.31,82.4) ;
\draw [shift={(184.22,80.72)}, rotate = 416.98] [color={rgb, 255:red, 255; green, 0; blue, 31 }  ,draw opacity=1 ][line width=0.75]    (10.93,-3.29) .. controls (6.95,-1.4) and (3.31,-0.3) .. (0,0) .. controls (3.31,0.3) and (6.95,1.4) .. (10.93,3.29)   ;

\draw [color={rgb, 255:red, 0; green, 115; blue, 255 }  ,draw opacity=1 ]   (190.5,106.99) -- (173.15,110.62) ;
\draw [shift={(171.19,111.03)}, rotate = 348.18] [color={rgb, 255:red, 0; green, 115; blue, 255 }  ,draw opacity=1 ][line width=0.75]    (10.93,-3.29) .. controls (6.95,-1.4) and (3.31,-0.3) .. (0,0) .. controls (3.31,0.3) and (6.95,1.4) .. (10.93,3.29)   ;

\end{tikzpicture}
    \caption{The ECSD $G(-2n+1, -2n+4)$ with $(n, -2n+1)$ edges colored blue and $(n, -2n+4)$ edges colored red.}
    \label{G(-2n+1,-2n+4)rb}
\end{figure}
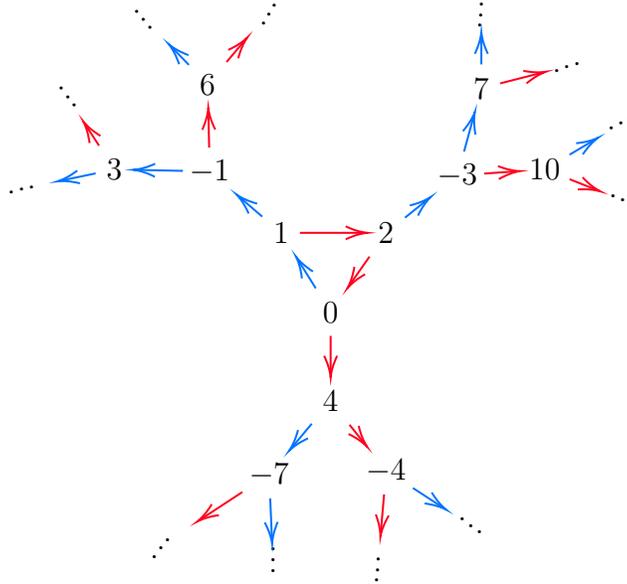

However, consider a more complicated case, such as $G(-2n+1, -2n+4)$ (shown in Figure \ref{G(-2n+1,-2n+4)rb} with $(n, -2n+1)$ edges colored blue and $(n, -2n+4)$ edges colored red). One can reach -3 by a blue step, a red step, and a blue step from 0, so the shortest representation of $-3$ would be ``$141$", as $1(-2)^2+4(-2)^1 + 1(-2)^0 = -3$. However, we can also reach $-3$ by going once around the cycle first, giving us a different representation of ``$144141$", as 
\[1(-2)^5 + 4(-2)^4 + 4(-2)^3 + 1(-2)^2+4(-2)^1 + 1(-2)^0 = -3.\] Each time we go around the cycle at the beginning, we add an extra ``144" to the beginning of the representation. This is because the shortest representation of $0$ in this digital system is ``$144$", since 0 is not a digit: we have that $1(-2)^2 + 4(-2)^1 + 4(-2)^0 = 0$. 
Thus, by ignoring leading appearances of ``$144$", we are really ignoring leading zeroes. \\

\noindent
The idea of non-standard representations is certainly not new, though this specific way of studying them seems to be. 

Arithmetic in negative bases was first proposed by Gr\"{u}nwald in 1885 \cite{Grunwald}, though his work seemed to be forgotten for a time.  With the advent of digital computing in the mid 20th century, interest in negative bases exploded, especially in regards to base $-2$ which became known as ``negabinary" (term first introduced by deRegt \cite{deRegt}). Numerous papers have been published on the subject, but are all about the computer science and efficiency of such number systems \cite{PawlakWakulicz}. Such a system was more efficient, because instead of needing a bit to represent whether a number was negative or positive, every \textit{integer} could be represented in such a manner, and its sign depended on the number of digits. It is important to note that all of these publications assumed a standard digit set. 

The most commonly explored type of alternate digit set is a \emph{symmetric} digit set, in which for an odd base $d$, the digits are taken to be $ -\frac{r-1}{2}, \ldots, \frac{r-1}{2} $. Such systems seem to be as old as civilization itself, for example, one can see similarities with the number system of the Yoruba people of West Africa \cite{Zaslavsky}. In such a system, one counts \textit{down} from a larger place value as well as up. 

As well as using an odd base and taking digits centered about 0, in fact, one can use any integer base $d \geq 3$ and any set of consecutive $d-1$ digits including $-1$, $0$, and $1$, and get a system in which one can represent all integers, not just all natural numbers. The first modern study of such systems is Colson in 1726 \cite{Colson}, who studied ``negativo-affirmative arithmetick," a base 10 system using the digits $\{-4, -3, \ldots, 4, 5 \}$. In his book \textit{The Philosophy of Arithmetic}, Leslie includes such an idea, suggesting the same digits as Colson, though acknowledging there could be other choices of digit set. \cite{Leslie}. Cauchy suggested the extension to an arbitrary base in 1840 \cite{Cauchy}, and Lalanne provided a follow-up discussion in the same year \cite{Lalanne}. A century later, in 1950, Shannon published an informative summary in the \textit{American Mathematical Monthly} \cite{Shannon}. In most of these publications, emphasis was put on the ease of calculating with such a number system --- it effectively gets rid of larger digits, and the same computations can be used for multiple calculations. 

One system in particular that holds great interest for computer scientists is \textit{balanced ternary}, which Knuth termed ``perhaps the prettiest number system of all:" a number system in base 3 with digit set $\{-1,0,1\}$ \cite{Knuth}. Of course, this is just a special case of a symmetric digit set. In the following proposition we use ECSDs to prove that every integer can be expressed in balanced ternary: 

\begin{prop}
Every integer can be written uniquely as $\displaystyle \sum_{i=0}^k \epsilon_i3^i$ for $\epsilon_i \in \{0, \pm 1\}$, where 
$ \epsilon_k \neq 0$.
\end{prop}

\begin{proof} It suffices to show that 0 is the only cyclic vertex of $G(3n, 3n-1, 3n+1)$, from which Theorem \ref{integerrepresentation} completes the proof. The ECSD $G(3n, 3n-1, 3n+1)$ is shown in Figure \ref{G(3n,3n+1,3n-1)}. 

Each cyclic vertex $m$ must have itself as a $k$-descendant for some $k \in \N$. However, if $m > 0$, its three successors $3m$, $3m+1$, and $3m-1$ are all greater than $m$, so none of its descendants will be $m$. Similarly, if $m < 0$, all of its descendants are less than $m$. Thus, 0 is the only cyclic vertex of $G(3n, 3n-1, 3n+1)$. By Theorem \ref{integerrepresentation}, each representation is unique up to leading zeroes.
\end{proof}

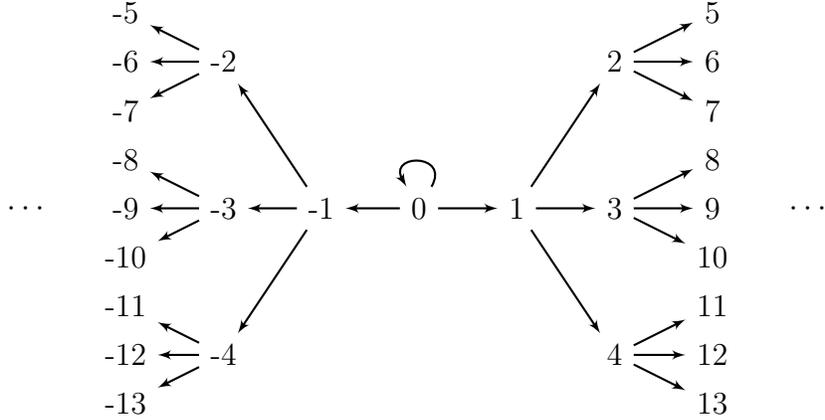
\begin{figure}[h]
    \centering
    \begin{tikzpicture}[scale=1.3]
    \node (0) at (0,0) {$0$};
    \draw[edge,draw=black,thick] (0) to[out=60,in=120,looseness=4] (0);
    \node (1) at (1,0) {1};
    \node (-1) at (-1,0) {-1};
    \edge{0}{1}
    \edge{0}{-1}
    
    \node (2) at (2,1.5) {2};
    \node (3) at (2,0) {3};
    \node (4) at (2,-1.5) {4};
    \edge{1}{2}
    \edge{1}{3}
    \edge{1}{4}
    
    \node (-2) at (-2,1.5) {-2};
    \node (-3) at (-2,0) {-3};
    \node (-4) at (-2,-1.5) {-4};
    \edge{-1}{-2}
    \edge{-1}{-3}
    \edge{-1}{-4}
    
    \node (5) at (3, 2) {5};
    \node (6) at (3, 1.5) {6};
    \node (7) at (3, 1) {7};
    \edge{2}{5}
    \edge{2}{6}
    \edge{2}{7}

    \node (8) at (3, .5) {8};
    \node (9) at (3, 0) {9};
    \node (10) at (3, -.5) {10};
    \edge{3}{8}
    \edge{3}{9}
    \edge{3}{10}
    
    \node (11) at (3, -1) {11};
    \node (12) at (3, -1.5) {12};
    \node (13) at (3, -2) {13};
    \edge{4}{11}
    \edge{4}{12}
    \edge{4}{13}
    
    \node (-5) at (-3, 2) {-5};
    \node (-6) at (-3, 1.5) {-6};
    \node (-7) at (-3, 1) {-7};
    \edge{-2}{-5}
    \edge{-2}{-6}
    \edge{-2}{-7}

    \node (-8) at (-3, .5) {-8};
    \node (-9) at (-3, 0) {-9};
    \node (-10) at (-3, -.5) {-10};
    \edge{-3}{-8}
    \edge{-3}{-9}
    \edge{-3}{-10}
    
    \node (-11) at (-3, -1) {-11};
    \node (-12) at (-3, -1.5) {-12};
    \node (-13) at (-3, -2) {-13};
    \edge{-4}{-11}
    \edge{-4}{-12}
    \edge{-4}{-13}
    
    \node at (4, 0) {$\cdots$};
    \node at (-4, 0) {$\cdots$};
\end{tikzpicture}
    \caption{The ECSD $G(3n, 3n+1, 3n-1)$.}
    \label{G(3n,3n+1,3n-1)}
\end{figure}

We can extend this proof to the general case for a consecutive digit set including digits $-1$, 0, and 1 for base $d \geq 3$:

\begin{prop}[\cite{Colson}] \label{nonstandarddaryrep}
Let $d \geq 3$. If $-(d-1) < t < 0$, then every integer can be written uniquely as \[\displaystyle \sum_{i=0}^k b_i d^i \text{ for } b_i \in \{t, t+1, \ldots, 0 \ldots, t+d-1\} \text{ where } b_k \neq 0.\] 
\end{prop}

\begin{proof}
It suffices to show that 0 is the only cyclic vertex of \[G(dn+ t, dn+(t+1), \ldots, dn, \ldots, dn+ t+d-1),\] from which Theorem \ref{integerrepresentation} completes the proof. We can see that 0 is a loop in this ECSD because $dm = m$ if and only if $m = 0$. If $m \geq 1$, all of its successors are greater than $m$, because the smallest successor is $dm+t > dm -(d-1) \geq m$, so it cannot be a descendant of itself. If $m \leq -1$, all of its successors are smaller than $m$, because the largest successor is $dm+t+d-1 < dm+d-1 \leq m$, so it cannot be a descendant of itself. Therefore 0 is the only cyclic vertex.
\end{proof}

\begin{remark}
In the previous proposition, if $t=0$, we get the standard $d$-ary representation, that is, every non-negative integer can be expressed uniquely as \[\sum_{i=0}^k b_i d^i\text{ for }b_i \in \{0, 1, 2, \ldots, d-1\}.\] If $t = -(d-1)$, we get the standard representation but multiplied by $-1$, that is, we can represent every non-positive integer as $\sum_{i=0}^k b_i d^i$ for $b_i \in \{0, -1, -2, \ldots, -(d-1)\}$. Both of these can be proved in a similar manner to the proof of Theorem \ref{nonstandarddaryrep} using ECSDs: each corresponding ECSD has two components, and the component with 0 as the sole cyclic vertex contains all the non-negative integers in the standard case and all the non-positive integers in the second case. 
\end{remark}

Notably, this digital representation does not work for $d=2$, because there is no valid integer choice of $t$ between $-(d-1) = -1$ and $0$. However, we can represent every integer in base ``2" using the \textit{nega}binary representation, as follows: 

\begin{prop}[\cite{Grunwald}]
Every integer can be written uniquely as $\displaystyle \sum_{i=0}^\infty b_i(-2)^i$ for $b_i \in \{0,1\}$. 
\end{prop}

The ECSD $G(-2n, -2n+1)$ has a single component and 0 is its only cyclic vertex. It is shown in Figure \ref{G(-2n,-2n+1)}, where we can see that the rows alternate between all negative and all positive values: the direct $k$-descendants of 0 are positive for odd $k$ and negative for even $k$. 

\begin{figure}[h]
\centering
\begin{tikzpicture}[x=0.75pt,y=0.75pt,yscale=-1,xscale=1]

\draw [color={rgb, 255:red, 0; green, 0; blue, 0 }  ,draw opacity=1 ]   (281,38.5) .. controls (260.42,10.08) and (311.87,10.47) .. (298.88,35.91) ;
\draw [shift={(298,37.5)}, rotate = 300.65] [color={rgb, 255:red, 0; green, 0; blue, 0 }  ,draw opacity=1 ][line width=0.75]    (10.93,-3.29) .. controls (6.95,-1.4) and (3.31,-0.3) .. (0,0) .. controls (3.31,0.3) and (6.95,1.4) .. (10.93,3.29)   ;

\draw (290.17,44.83) node    {$0$};

\draw (290.17,91.33) node    {$1$};

\draw (224,132.33) node    {$-1$};

\draw (355.33,131) node    {$-2$};

\draw (197,170.33) node    {$2$};

\draw (256.67,170.33) node    {$3$};

\draw (324,170.33) node    {$4$};

\draw (382.5,170.33) node    {$5$};

\draw (181.5,210.67) node    {$-3$};

\draw (210.5,210.67) node    {$-4$};

\draw (241.33,210.67) node    {$-5$};

\draw (270.17,210.67) node    {$-6$};

\draw (308.5,210.67) node    {$-7$};

\draw (339.83,210.67) node    {$-8$};

\draw (368.5,210.67) node    {$-9$};

\draw (401.33,210.67) node    {$-10$};

\draw (171,252) node    {$\vdots $};

\draw (188.41,252) node    {$\vdots $};

\draw (204.23,252) node    {$\vdots $};

\draw (220.05,252) node    {$\vdots $};

\draw (235.87,252) node    {$\vdots $};

\draw (251.7,252) node    {$\vdots $};

\draw (267.52,252) node    {$\vdots $};

\draw (283.34,252) node    {$\vdots $};

\draw (299.16,252) node    {$\vdots $};

\draw (314.98,252) node    {$\vdots $};

\draw (330.8,252) node    {$\vdots $};

\draw (346.62,252) node    {$\vdots $};

\draw (362.45,252) node    {$\vdots $};

\draw (378.27,252) node    {$\vdots $};

\draw (394.09,252) node    {$\vdots $};

\draw (411.5,252) node    {$\vdots $};

\draw    (290.17,56.83) -- (290.17,77.33) ;
\draw [shift={(290.17,79.33)}, rotate = 270] [color={rgb, 255:red, 0; green, 0; blue, 0 }  ][line width=0.75]    (10.93,-3.29) .. controls (6.95,-1.4) and (3.31,-0.3) .. (0,0) .. controls (3.31,0.3) and (6.95,1.4) .. (10.93,3.29)   ;

\draw    (299.67,97.12) -- (340.12,121.74) ;
\draw [shift={(341.83,122.78)}, rotate = 211.32999999999998] [color={rgb, 255:red, 0; green, 0; blue, 0 }  ][line width=0.75]    (10.93,-3.29) .. controls (6.95,-1.4) and (3.31,-0.3) .. (0,0) .. controls (3.31,0.3) and (6.95,1.4) .. (10.93,3.29)   ;

\draw    (280.67,97.22) -- (239.2,122.91) ;
\draw [shift={(237.5,123.97)}, rotate = 328.22] [color={rgb, 255:red, 0; green, 0; blue, 0 }  ][line width=0.75]    (10.93,-3.29) .. controls (6.95,-1.4) and (3.31,-0.3) .. (0,0) .. controls (3.31,0.3) and (6.95,1.4) .. (10.93,3.29)   ;

\draw    (234.32,144.33) -- (245.86,157.77) ;
\draw [shift={(247.17,159.28)}, rotate = 229.32] [color={rgb, 255:red, 0; green, 0; blue, 0 }  ][line width=0.75]    (10.93,-3.29) .. controls (6.95,-1.4) and (3.31,-0.3) .. (0,0) .. controls (3.31,0.3) and (6.95,1.4) .. (10.93,3.29)   ;

\draw    (215.47,144.33) -- (206.68,156.7) ;
\draw [shift={(205.53,158.33)}, rotate = 305.39] [color={rgb, 255:red, 0; green, 0; blue, 0 }  ][line width=0.75]    (10.93,-3.29) .. controls (6.95,-1.4) and (3.31,-0.3) .. (0,0) .. controls (3.31,0.3) and (6.95,1.4) .. (10.93,3.29)   ;

\draw    (345.77,143) -- (334.75,156.84) ;
\draw [shift={(333.5,158.41)}, rotate = 308.53999999999996] [color={rgb, 255:red, 0; green, 0; blue, 0 }  ][line width=0.75]    (10.93,-3.29) .. controls (6.95,-1.4) and (3.31,-0.3) .. (0,0) .. controls (3.31,0.3) and (6.95,1.4) .. (10.93,3.29)   ;

\draw    (363.62,143) -- (373.08,156.69) ;
\draw [shift={(374.21,158.33)}, rotate = 235.37] [color={rgb, 255:red, 0; green, 0; blue, 0 }  ][line width=0.75]    (10.93,-3.29) .. controls (6.95,-1.4) and (3.31,-0.3) .. (0,0) .. controls (3.31,0.3) and (6.95,1.4) .. (10.93,3.29)   ;

\draw    (192.39,182.33) -- (186.83,196.8) ;
\draw [shift={(186.11,198.67)}, rotate = 291.02] [color={rgb, 255:red, 0; green, 0; blue, 0 }  ][line width=0.75]    (10.93,-3.29) .. controls (6.95,-1.4) and (3.31,-0.3) .. (0,0) .. controls (3.31,0.3) and (6.95,1.4) .. (10.93,3.29)   ;

\draw    (201.02,182.33) -- (205.85,196.77) ;
\draw [shift={(206.48,198.67)}, rotate = 251.49] [color={rgb, 255:red, 0; green, 0; blue, 0 }  ][line width=0.75]    (10.93,-3.29) .. controls (6.95,-1.4) and (3.31,-0.3) .. (0,0) .. controls (3.31,0.3) and (6.95,1.4) .. (10.93,3.29)   ;

\draw    (252.1,182.33) -- (246.61,196.8) ;
\draw [shift={(245.9,198.67)}, rotate = 290.82] [color={rgb, 255:red, 0; green, 0; blue, 0 }  ][line width=0.75]    (10.93,-3.29) .. controls (6.95,-1.4) and (3.31,-0.3) .. (0,0) .. controls (3.31,0.3) and (6.95,1.4) .. (10.93,3.29)   ;

\draw    (260.68,182.33) -- (265.52,196.77) ;
\draw [shift={(266.15,198.67)}, rotate = 251.49] [color={rgb, 255:red, 0; green, 0; blue, 0 }  ][line width=0.75]    (10.93,-3.29) .. controls (6.95,-1.4) and (3.31,-0.3) .. (0,0) .. controls (3.31,0.3) and (6.95,1.4) .. (10.93,3.29)   ;

\draw    (319.39,182.33) -- (313.83,196.8) ;
\draw [shift={(313.11,198.67)}, rotate = 291.02] [color={rgb, 255:red, 0; green, 0; blue, 0 }  ][line width=0.75]    (10.93,-3.29) .. controls (6.95,-1.4) and (3.31,-0.3) .. (0,0) .. controls (3.31,0.3) and (6.95,1.4) .. (10.93,3.29)   ;

\draw    (328.71,182.33) -- (334.39,196.8) ;
\draw [shift={(335.12,198.67)}, rotate = 248.57] [color={rgb, 255:red, 0; green, 0; blue, 0 }  ][line width=0.75]    (10.93,-3.29) .. controls (6.95,-1.4) and (3.31,-0.3) .. (0,0) .. controls (3.31,0.3) and (6.95,1.4) .. (10.93,3.29)   ;

\draw    (378.33,182.33) -- (373.32,196.78) ;
\draw [shift={(372.67,198.67)}, rotate = 289.14] [color={rgb, 255:red, 0; green, 0; blue, 0 }  ][line width=0.75]    (10.93,-3.29) .. controls (6.95,-1.4) and (3.31,-0.3) .. (0,0) .. controls (3.31,0.3) and (6.95,1.4) .. (10.93,3.29)   ;

\draw    (388.1,182.33) -- (394.88,196.85) ;
\draw [shift={(395.73,198.67)}, rotate = 244.97] [color={rgb, 255:red, 0; green, 0; blue, 0 }  ][line width=0.75]    (10.93,-3.29) .. controls (6.95,-1.4) and (3.31,-0.3) .. (0,0) .. controls (3.31,0.3) and (6.95,1.4) .. (10.93,3.29)   ;

\draw    (178.45,222.67) -- (174.54,238.06) ;
\draw [shift={(174.05,240)}, rotate = 284.25] [color={rgb, 255:red, 0; green, 0; blue, 0 }  ][line width=0.75]    (10.93,-3.29) .. controls (6.95,-1.4) and (3.31,-0.3) .. (0,0) .. controls (3.31,0.3) and (6.95,1.4) .. (10.93,3.29)   ;

\draw    (183.51,222.67) -- (186.07,238.03) ;
\draw [shift={(186.4,240)}, rotate = 260.51] [color={rgb, 255:red, 0; green, 0; blue, 0 }  ][line width=0.75]    (10.93,-3.29) .. controls (6.95,-1.4) and (3.31,-0.3) .. (0,0) .. controls (3.31,0.3) and (6.95,1.4) .. (10.93,3.29)   ;

\draw    (208.68,222.67) -- (206.35,238.02) ;
\draw [shift={(206.05,240)}, rotate = 278.62] [color={rgb, 255:red, 0; green, 0; blue, 0 }  ][line width=0.75]    (10.93,-3.29) .. controls (6.95,-1.4) and (3.31,-0.3) .. (0,0) .. controls (3.31,0.3) and (6.95,1.4) .. (10.93,3.29)   ;

\draw    (213.27,222.67) -- (216.83,238.05) ;
\draw [shift={(217.28,240)}, rotate = 256.99] [color={rgb, 255:red, 0; green, 0; blue, 0 }  ][line width=0.75]    (10.93,-3.29) .. controls (6.95,-1.4) and (3.31,-0.3) .. (0,0) .. controls (3.31,0.3) and (6.95,1.4) .. (10.93,3.29)   ;

\draw    (239.75,222.67) -- (237.72,238.02) ;
\draw [shift={(237.46,240)}, rotate = 277.52] [color={rgb, 255:red, 0; green, 0; blue, 0 }  ][line width=0.75]    (10.93,-3.29) .. controls (6.95,-1.4) and (3.31,-0.3) .. (0,0) .. controls (3.31,0.3) and (6.95,1.4) .. (10.93,3.29)   ;

\draw    (244.34,222.67) -- (248.2,238.06) ;
\draw [shift={(248.69,240)}, rotate = 255.92000000000002] [color={rgb, 255:red, 0; green, 0; blue, 0 }  ][line width=0.75]    (10.93,-3.29) .. controls (6.95,-1.4) and (3.31,-0.3) .. (0,0) .. controls (3.31,0.3) and (6.95,1.4) .. (10.93,3.29)   ;

\draw    (269.4,222.67) -- (268.41,238) ;
\draw [shift={(268.29,240)}, rotate = 273.67] [color={rgb, 255:red, 0; green, 0; blue, 0 }  ][line width=0.75]    (10.93,-3.29) .. controls (6.95,-1.4) and (3.31,-0.3) .. (0,0) .. controls (3.31,0.3) and (6.95,1.4) .. (10.93,3.29)   ;

\draw    (273.99,222.67) -- (278.91,238.09) ;
\draw [shift={(279.51,240)}, rotate = 252.32] [color={rgb, 255:red, 0; green, 0; blue, 0 }  ][line width=0.75]    (10.93,-3.29) .. controls (6.95,-1.4) and (3.31,-0.3) .. (0,0) .. controls (3.31,0.3) and (6.95,1.4) .. (10.93,3.29)   ;

\draw    (310.38,222.67) -- (312.79,238.02) ;
\draw [shift={(313.1,240)}, rotate = 261.09000000000003] [color={rgb, 255:red, 0; green, 0; blue, 0 }  ][line width=0.75]    (10.93,-3.29) .. controls (6.95,-1.4) and (3.31,-0.3) .. (0,0) .. controls (3.31,0.3) and (6.95,1.4) .. (10.93,3.29)   ;

\draw    (305.79,222.67) -- (302.31,238.05) ;
\draw [shift={(301.87,240)}, rotate = 282.73] [color={rgb, 255:red, 0; green, 0; blue, 0 }  ][line width=0.75]    (10.93,-3.29) .. controls (6.95,-1.4) and (3.31,-0.3) .. (0,0) .. controls (3.31,0.3) and (6.95,1.4) .. (10.93,3.29)   ;

\draw    (337.21,222.67) -- (333.85,238.05) ;
\draw [shift={(333.43,240)}, rotate = 282.32] [color={rgb, 255:red, 0; green, 0; blue, 0 }  ][line width=0.75]    (10.93,-3.29) .. controls (6.95,-1.4) and (3.31,-0.3) .. (0,0) .. controls (3.31,0.3) and (6.95,1.4) .. (10.93,3.29)   ;

\draw    (341.81,222.67) -- (344.33,238.03) ;
\draw [shift={(344.65,240)}, rotate = 260.67] [color={rgb, 255:red, 0; green, 0; blue, 0 }  ][line width=0.75]    (10.93,-3.29) .. controls (6.95,-1.4) and (3.31,-0.3) .. (0,0) .. controls (3.31,0.3) and (6.95,1.4) .. (10.93,3.29)   ;

\draw    (366.74,222.67) -- (364.49,238.02) ;
\draw [shift={(364.2,240)}, rotate = 278.33] [color={rgb, 255:red, 0; green, 0; blue, 0 }  ][line width=0.75]    (10.93,-3.29) .. controls (6.95,-1.4) and (3.31,-0.3) .. (0,0) .. controls (3.31,0.3) and (6.95,1.4) .. (10.93,3.29)   ;

\draw    (371.34,222.67) -- (374.97,238.05) ;
\draw [shift={(375.43,240)}, rotate = 256.7] [color={rgb, 255:red, 0; green, 0; blue, 0 }  ][line width=0.75]    (10.93,-3.29) .. controls (6.95,-1.4) and (3.31,-0.3) .. (0,0) .. controls (3.31,0.3) and (6.95,1.4) .. (10.93,3.29)   ;

\draw    (399.23,222.67) -- (396.54,238.03) ;
\draw [shift={(396.19,240)}, rotate = 279.94] [color={rgb, 255:red, 0; green, 0; blue, 0 }  ][line width=0.75]    (10.93,-3.29) .. controls (6.95,-1.4) and (3.31,-0.3) .. (0,0) .. controls (3.31,0.3) and (6.95,1.4) .. (10.93,3.29)   ;

\draw    (404.28,222.67) -- (408.07,238.06) ;
\draw [shift={(408.55,240)}, rotate = 256.18] [color={rgb, 255:red, 0; green, 0; blue, 0 }  ][line width=0.75]    (10.93,-3.29) .. controls (6.95,-1.4) and (3.31,-0.3) .. (0,0) .. controls (3.31,0.3) and (6.95,1.4) .. (10.93,3.29)   ;

\end{tikzpicture}
\caption{The ECSD $G(-2n, -2n+1)$.}
\label{G(-2n,-2n+1)}
\end{figure}
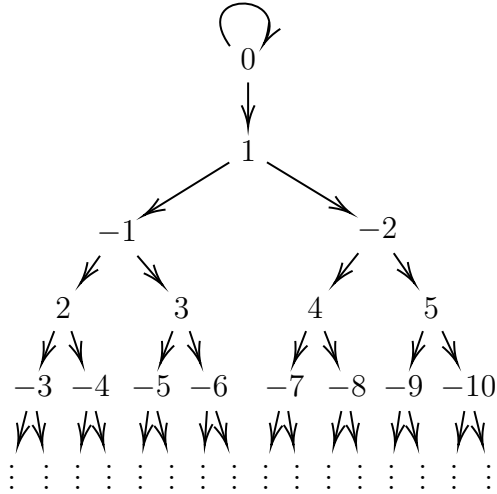

We can see that the structure of this ECSD will hold for a generalization into ECSDs of higher degree, giving us the same result for negative-base digital representations of degree $d$. For example, the ECSD $G(-3n, -3n+1, -3n+2)$ will have the same general structure, containing every integer and alternating between positive and negative rows, shown in Figure \ref{G(-3n,-3n+1,-3n+2)}. Thus, we can prove the following: 

\begin{figure}[h]
\centering
\begin{tikzpicture}[x=0.75pt,y=0.75pt,yscale=-.75,xscale=.75]

\draw [color={rgb, 255:red, 0; green, 0; blue, 0 }  ,draw opacity=1 ]   (308,31.5) .. controls (287.42,3.08) and (338.87,3.47) .. (325.88,28.91) ;
\draw [shift={(325,30.5)}, rotate = 300.65] [color={rgb, 255:red, 0; green, 0; blue, 0 }  ,draw opacity=1 ][line width=0.75]    (10.93,-3.29) .. controls (6.95,-1.4) and (3.31,-0.3) .. (0,0) .. controls (3.31,0.3) and (6.95,1.4) .. (10.93,3.29)   ;

\draw (317.17,37.83) node    {$0$};

\draw (39.36,194) node    {$\vdots $};

\draw (66.6,193) node    {$\vdots $};

\draw (95.36,193) node    {$\vdots $};

\draw (129.11,193) node    {$\vdots $};

\draw (157.86,193) node    {$\vdots $};

\draw (191.37,193) node    {$\vdots $};

\draw (221.12,193) node    {$\vdots $};

\draw (262.04,194) node    {$\vdots $};

\draw (294.64,194) node    {$\vdots $};

\draw (333.25,194) node    {$\vdots $};

\draw (368.85,194) node    {$\vdots $};

\draw (404.45,194) node    {$\vdots $};

\draw (443.05,194) node    {$\vdots $};

\draw (479.66,194) node    {$\vdots $};

\draw (517.84,194) node    {$\vdots $};

\draw (164,84.33) node    {$1$};

\draw (480,84.33) node    {$2$};

\draw (68,127) node    {$-1$};

\draw (162.5,127) node    {$-2$};

\draw (258,130) node    {$-3$};

\draw (368,127) node    {$-4$};

\draw (480,127) node    {$-5$};

\draw (592,130) node    {$-6$};

\draw (39,170.33) node    {$3$};

\draw (67.13,170.33) node    {$4$};

\draw (98.38,170.33) node    {$5$};

\draw (129.63,170.33) node    {$6$};

\draw (160.88,170.33) node    {$7$};

\draw (192.13,170.33) node    {$8$};

\draw (223.38,170.33) node    {$9$};

\draw (257.63,170.33) node    {$10$};

\draw (294.88,170.33) node    {$11$};

\draw (332.13,170.33) node    {$12$};

\draw (369.38,170.33) node    {$13$};

\draw (406.63,170.33) node    {$14$};

\draw (443.88,170.33) node    {$15$};

\draw (481.13,170.33) node    {$16$};

\draw (518.38,170.33) node    {$17$};

\draw (555.63,170.33) node    {$18$};

\draw (592.88,170.33) node    {$19$};

\draw (627,170.33) node    {$20$};

\draw (554.84,194) node    {$\vdots $};

\draw (593.84,194) node    {$\vdots $};

\draw (624.84,194) node    {$\vdots $};

\draw    (307.67,40.72) -- (175.41,80.87) ;
\draw [shift={(173.5,81.45)}, rotate = 343.11] [color={rgb, 255:red, 0; green, 0; blue, 0 }  ][line width=0.75]    (10.93,-3.29) .. controls (6.95,-1.4) and (3.31,-0.3) .. (0,0) .. controls (3.31,0.3) and (6.95,1.4) .. (10.93,3.29)   ;

\draw    (154.5,88.56) -- (83.33,120.19) ;
\draw [shift={(81.5,121)}, rotate = 336.03999999999996] [color={rgb, 255:red, 0; green, 0; blue, 0 }  ][line width=0.75]    (10.93,-3.29) .. controls (6.95,-1.4) and (3.31,-0.3) .. (0,0) .. controls (3.31,0.3) and (6.95,1.4) .. (10.93,3.29)   ;

\draw    (163.58,96.33) -- (162.99,113) ;
\draw [shift={(162.92,115)}, rotate = 272.01] [color={rgb, 255:red, 0; green, 0; blue, 0 }  ][line width=0.75]    (10.93,-3.29) .. controls (6.95,-1.4) and (3.31,-0.3) .. (0,0) .. controls (3.31,0.3) and (6.95,1.4) .. (10.93,3.29)   ;

\draw    (173.5,88.95) -- (242.7,122.57) ;
\draw [shift={(244.5,123.44)}, rotate = 205.91] [color={rgb, 255:red, 0; green, 0; blue, 0 }  ][line width=0.75]    (10.93,-3.29) .. controls (6.95,-1.4) and (3.31,-0.3) .. (0,0) .. controls (3.31,0.3) and (6.95,1.4) .. (10.93,3.29)   ;

\draw    (326.67,40.55) -- (468.58,81.07) ;
\draw [shift={(470.5,81.62)}, rotate = 195.94] [color={rgb, 255:red, 0; green, 0; blue, 0 }  ][line width=0.75]    (10.93,-3.29) .. controls (6.95,-1.4) and (3.31,-0.3) .. (0,0) .. controls (3.31,0.3) and (6.95,1.4) .. (10.93,3.29)   ;

\draw    (470.5,87.95) -- (383.37,121.15) ;
\draw [shift={(381.5,121.86)}, rotate = 339.15] [color={rgb, 255:red, 0; green, 0; blue, 0 }  ][line width=0.75]    (10.93,-3.29) .. controls (6.95,-1.4) and (3.31,-0.3) .. (0,0) .. controls (3.31,0.3) and (6.95,1.4) .. (10.93,3.29)   ;

\draw    (480,96.33) -- (480,113) ;
\draw [shift={(480,115)}, rotate = 270] [color={rgb, 255:red, 0; green, 0; blue, 0 }  ][line width=0.75]    (10.93,-3.29) .. controls (6.95,-1.4) and (3.31,-0.3) .. (0,0) .. controls (3.31,0.3) and (6.95,1.4) .. (10.93,3.29)   ;

\draw    (489.5,88.21) -- (576.65,123.74) ;
\draw [shift={(578.5,124.5)}, rotate = 202.18] [color={rgb, 255:red, 0; green, 0; blue, 0 }  ][line width=0.75]    (10.93,-3.29) .. controls (6.95,-1.4) and (3.31,-0.3) .. (0,0) .. controls (3.31,0.3) and (6.95,1.4) .. (10.93,3.29)   ;

\draw    (59.97,139) -- (48.14,156.67) ;
\draw [shift={(47.03,158.33)}, rotate = 303.79] [color={rgb, 255:red, 0; green, 0; blue, 0 }  ][line width=0.75]    (10.93,-3.29) .. controls (6.95,-1.4) and (3.31,-0.3) .. (0,0) .. controls (3.31,0.3) and (6.95,1.4) .. (10.93,3.29)   ;

\draw    (67.76,139) -- (67.41,156.33) ;
\draw [shift={(67.37,158.33)}, rotate = 271.15999999999997] [color={rgb, 255:red, 0; green, 0; blue, 0 }  ][line width=0.75]    (10.93,-3.29) .. controls (6.95,-1.4) and (3.31,-0.3) .. (0,0) .. controls (3.31,0.3) and (6.95,1.4) .. (10.93,3.29)   ;

\draw    (76.41,139) -- (88.82,156.7) ;
\draw [shift={(89.96,158.33)}, rotate = 234.97] [color={rgb, 255:red, 0; green, 0; blue, 0 }  ][line width=0.75]    (10.93,-3.29) .. controls (6.95,-1.4) and (3.31,-0.3) .. (0,0) .. controls (3.31,0.3) and (6.95,1.4) .. (10.93,3.29)   ;

\draw    (153.4,139) -- (139.94,156.74) ;
\draw [shift={(138.73,158.33)}, rotate = 307.19] [color={rgb, 255:red, 0; green, 0; blue, 0 }  ][line width=0.75]    (10.93,-3.29) .. controls (6.95,-1.4) and (3.31,-0.3) .. (0,0) .. controls (3.31,0.3) and (6.95,1.4) .. (10.93,3.29)   ;

\draw    (162.05,139) -- (161.4,156.33) ;
\draw [shift={(161.32,158.33)}, rotate = 272.15] [color={rgb, 255:red, 0; green, 0; blue, 0 }  ][line width=0.75]    (10.93,-3.29) .. controls (6.95,-1.4) and (3.31,-0.3) .. (0,0) .. controls (3.31,0.3) and (6.95,1.4) .. (10.93,3.29)   ;

\draw    (170.7,139) -- (182.79,156.68) ;
\draw [shift={(183.92,158.33)}, rotate = 235.64] [color={rgb, 255:red, 0; green, 0; blue, 0 }  ][line width=0.75]    (10.93,-3.29) .. controls (6.95,-1.4) and (3.31,-0.3) .. (0,0) .. controls (3.31,0.3) and (6.95,1.4) .. (10.93,3.29)   ;

\draw    (247.7,142) -- (234.18,157.75) ;
\draw [shift={(232.88,159.27)}, rotate = 310.65] [color={rgb, 255:red, 0; green, 0; blue, 0 }  ][line width=0.75]    (10.93,-3.29) .. controls (6.95,-1.4) and (3.31,-0.3) .. (0,0) .. controls (3.31,0.3) and (6.95,1.4) .. (10.93,3.29)   ;

\draw    (257.89,142) -- (257.76,156.33) ;
\draw [shift={(257.74,158.33)}, rotate = 270.53] [color={rgb, 255:red, 0; green, 0; blue, 0 }  ][line width=0.75]    (10.93,-3.29) .. controls (6.95,-1.4) and (3.31,-0.3) .. (0,0) .. controls (3.31,0.3) and (6.95,1.4) .. (10.93,3.29)   ;

\draw    (268.97,142) -- (282.55,156.86) ;
\draw [shift={(283.9,158.33)}, rotate = 227.56] [color={rgb, 255:red, 0; green, 0; blue, 0 }  ][line width=0.75]    (10.93,-3.29) .. controls (6.95,-1.4) and (3.31,-0.3) .. (0,0) .. controls (3.31,0.3) and (6.95,1.4) .. (10.93,3.29)   ;

\draw    (358.07,139) -- (343.34,156.79) ;
\draw [shift={(342.06,158.33)}, rotate = 309.62] [color={rgb, 255:red, 0; green, 0; blue, 0 }  ][line width=0.75]    (10.93,-3.29) .. controls (6.95,-1.4) and (3.31,-0.3) .. (0,0) .. controls (3.31,0.3) and (6.95,1.4) .. (10.93,3.29)   ;

\draw    (368.38,139) -- (368.93,156.33) ;
\draw [shift={(368.99,158.33)}, rotate = 268.18] [color={rgb, 255:red, 0; green, 0; blue, 0 }  ][line width=0.75]    (10.93,-3.29) .. controls (6.95,-1.4) and (3.31,-0.3) .. (0,0) .. controls (3.31,0.3) and (6.95,1.4) .. (10.93,3.29)   ;

\draw    (378.7,139) -- (394.6,156.84) ;
\draw [shift={(395.93,158.33)}, rotate = 228.29] [color={rgb, 255:red, 0; green, 0; blue, 0 }  ][line width=0.75]    (10.93,-3.29) .. controls (6.95,-1.4) and (3.31,-0.3) .. (0,0) .. controls (3.31,0.3) and (6.95,1.4) .. (10.93,3.29)   ;

\draw    (470,139) -- (455.16,156.8) ;
\draw [shift={(453.88,158.33)}, rotate = 309.82] [color={rgb, 255:red, 0; green, 0; blue, 0 }  ][line width=0.75]    (10.93,-3.29) .. controls (6.95,-1.4) and (3.31,-0.3) .. (0,0) .. controls (3.31,0.3) and (6.95,1.4) .. (10.93,3.29)   ;

\draw    (480.31,139) -- (480.76,156.33) ;
\draw [shift={(480.81,158.33)}, rotate = 268.51] [color={rgb, 255:red, 0; green, 0; blue, 0 }  ][line width=0.75]    (10.93,-3.29) .. controls (6.95,-1.4) and (3.31,-0.3) .. (0,0) .. controls (3.31,0.3) and (6.95,1.4) .. (10.93,3.29)   ;

\draw    (490.63,139) -- (506.42,156.84) ;
\draw [shift={(507.75,158.33)}, rotate = 228.47] [color={rgb, 255:red, 0; green, 0; blue, 0 }  ][line width=0.75]    (10.93,-3.29) .. controls (6.95,-1.4) and (3.31,-0.3) .. (0,0) .. controls (3.31,0.3) and (6.95,1.4) .. (10.93,3.29)   ;

\draw    (581.18,142) -- (567.79,156.85) ;
\draw [shift={(566.45,158.33)}, rotate = 312.05] [color={rgb, 255:red, 0; green, 0; blue, 0 }  ][line width=0.75]    (10.93,-3.29) .. controls (6.95,-1.4) and (3.31,-0.3) .. (0,0) .. controls (3.31,0.3) and (6.95,1.4) .. (10.93,3.29)   ;

\draw    (592.26,142) -- (592.57,156.33) ;
\draw [shift={(592.61,158.33)}, rotate = 268.76] [color={rgb, 255:red, 0; green, 0; blue, 0 }  ][line width=0.75]    (10.93,-3.29) .. controls (6.95,-1.4) and (3.31,-0.3) .. (0,0) .. controls (3.31,0.3) and (6.95,1.4) .. (10.93,3.29)   ;

\draw    (602.41,142) -- (615.28,156.82) ;
\draw [shift={(616.59,158.33)}, rotate = 229.05] [color={rgb, 255:red, 0; green, 0; blue, 0 }  ][line width=0.75]    (10.93,-3.29) .. controls (6.95,-1.4) and (3.31,-0.3) .. (0,0) .. controls (3.31,0.3) and (6.95,1.4) .. (10.93,3.29)   ;

\end{tikzpicture}\caption{The ECSD $G(-3n,-3n+1,-3n+2)$.}
\label{G(-3n,-3n+1,-3n+2)}
\end{figure}
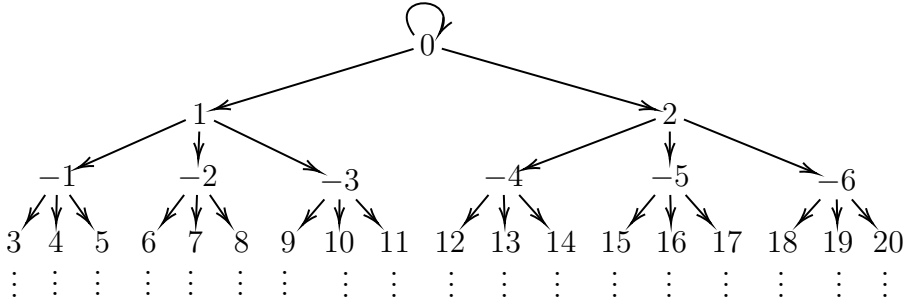

\begin{prop}[\cite{Grunwald}]
For any natural number $d \geq 2$, every integer can be written uniquely as $\displaystyle \sum_{i=0}^\infty b_i(-d)^i$ for $b_i \in \{0, 1, \ldots, d-1\}$.
\end{prop}

\begin{proof}
It again suffices to show that 0 is the only cyclic vertex of \[G(-dn, -dn + 1, \ldots, -dn + d -1).\] 

Suppose $m > 0$. Then  its successors are negative and its $2$-descendants are positive. Its largest successor is $-dm+d-1$, so its smallest $2$-descendant is \[-d(-dm+d-1) = d^2(m-1)+d > m\] since $d \geq 2$ and $m > 0$. It follows that all of the positive descendants of $m$ are greater than $m$, thus $m$ is not a cyclic vertex.

Suppose $m < 0$. Then its successors are positive and its 2-descendants are negative. Its smallest successor is $-dm$, so its largest 2-descendant is $-d(-dm)+d-1 = d^2m + d-1 < m$ since $d \geq 2$ and $m < 0$. It follows that all of the negative descendants of $m$ are less than $m$, and thus that $m$ is not a cyclic vertex.

Since $-d(0)=0$, we have that 0 is a cyclic vertex, so $m=0$ is the only cyclic vertex. 
\end{proof}

In later years the topic of numeration systems was extended to consider any base, positive or negative, along with any digit set --- however, the digit set is always defined to include the digit 0 \cite{AlloucheShallit}. This is because of the leading-zeroes problem, which we have already accounted for in Theorem \ref{integerrepresentation}: by ignoring leading appearances of the block of digits corresponding to the base cycle, this is equivalent to ignoring leading zeroes. 

If we expand our definitions to allow digit sets that do not include 0, we can find other non-standard digits sets for a negabinary representation that can represent all integers. This task is equivalent to finding ECSDs of degree 2 with one component. We can show that $G(-2n+1, -2n+4)$, depicted in Figure \ref{G(-2n+1,-2n+4)rb}, has a single component. We know that every cycle of an ECSD must contain a vertex $m$ such that $|P(m)| \geq |m|$, and by Lemma \ref{ancestorlemma}, vertices with this property are bounded by some $N \in \N$. Thus, it is simply a task of extending the graph until we have all of these vertices to determine the number of components. In the case of $G(-2n+1, -2n+4)$, $N=4$ by the proof of Lemma \ref{ancestorlemma}. Since Figure \ref{G(-2n+1,-2n+4)rb} includes all integers in the range $-4$ to $4$  (except for $-2$ which is a successor of 3), we can see that this is indeed the only component of this ECSD. By this same process, we can also show that the ECSD $G(-2n+1, -2n+10)$ has a single component, as depicted in Figure \ref{G(-2n+1,-2n+10)}.

\begin{figure}[h]
\centering
\begin{tikzpicture}[x=0.75pt,y=0.75pt,yscale=-1,xscale=1]

\draw (218,205) node    {$1$};

\draw (246,252) node    {$-1$};

\draw (302,270) node    {$3$};

\draw (380,207) node    {$2$};

\draw (348,252) node    {$4$};

\draw (370,153) node    {$6$};

\draw (331,115) node    {$-2$};

\draw (269,116) node    {$5$};

\draw (227,152) node    {$0$};

\draw (180,130) node    {$10$};

\draw (168,219) node    {$8$};

\draw (219,295) node    {$12$};

\draw (302,315) node    {$-5$};

\draw (384,293) node    {$-7$};

\draw (429,218) node    {$-3$};

\draw (419,137) node    {$-11$};

\draw (353,68) node    {$14$};

\draw (252,71) node    {$-9$};

\draw (156,88) node  [rotate=-331.57]  {$\vdots $};

\draw (135,147) node  [rotate=-250.07]  {$\vdots $};

\draw (128,191) node  [rotate=-294.15]  {$\vdots $};

\draw (135,258) node  [rotate=-223.15]  {$\vdots $};

\draw (175,308) node  [rotate=-244.28]  {$\vdots $};

\draw (219,343) node    {$\vdots $};

\draw (280,351) node  [rotate=-25.19]  {$\vdots $};

\draw (336,355) node  [rotate=-142.88]  {$\vdots $};

\draw (388,338) node  [rotate=-180]  {$\vdots $};

\draw (430,296) node  [rotate=-95.35]  {$\vdots $};

\draw (472,255) node  [rotate=-129.72]  {$\vdots $};

\draw (477,191) node  [rotate=-54.5]  {$\vdots $};

\draw (468,153) node  [rotate=-107.89]  {$\vdots $};

\draw (454,94) node  [rotate=-39.27]  {$\vdots $};

\draw (397,60) node  [rotate=-82.19]  {$\vdots $};

\draw (349,27) node  [rotate=-354.79]  {$\vdots $};

\draw (281,27) node  [rotate=-32.55]  {$\vdots $};

\draw (207,32) node  [rotate=-309.88]  {$\vdots $};

\draw    (225.15,217) -- (237.83,238.28) ;
\draw [shift={(238.85,240)}, rotate = 239.22] [color={rgb, 255:red, 0; green, 0; blue, 0 }  ][line width=0.75]    (10.93,-3.29) .. controls (6.95,-1.4) and (3.31,-0.3) .. (0,0) .. controls (3.31,0.3) and (6.95,1.4) .. (10.93,3.29)   ;

\draw    (259.5,256.34) -- (290.6,266.33) ;
\draw [shift={(292.5,266.95)}, rotate = 197.82] [color={rgb, 255:red, 0; green, 0; blue, 0 }  ][line width=0.75]    (10.93,-3.29) .. controls (6.95,-1.4) and (3.31,-0.3) .. (0,0) .. controls (3.31,0.3) and (6.95,1.4) .. (10.93,3.29)   ;

\draw    (224.96,164) -- (220.37,191.03) ;
\draw [shift={(220.04,193)}, rotate = 279.64] [color={rgb, 255:red, 0; green, 0; blue, 0 }  ][line width=0.75]    (10.93,-3.29) .. controls (6.95,-1.4) and (3.31,-0.3) .. (0,0) .. controls (3.31,0.3) and (6.95,1.4) .. (10.93,3.29)   ;

\draw    (259.5,124.14) -- (238.02,142.56) ;
\draw [shift={(236.5,143.86)}, rotate = 319.4] [color={rgb, 255:red, 0; green, 0; blue, 0 }  ][line width=0.75]    (10.93,-3.29) .. controls (6.95,-1.4) and (3.31,-0.3) .. (0,0) .. controls (3.31,0.3) and (6.95,1.4) .. (10.93,3.29)   ;

\draw    (317.5,115.22) -- (280.5,115.81) ;
\draw [shift={(278.5,115.85)}, rotate = 359.08000000000004] [color={rgb, 255:red, 0; green, 0; blue, 0 }  ][line width=0.75]    (10.93,-3.29) .. controls (6.95,-1.4) and (3.31,-0.3) .. (0,0) .. controls (3.31,0.3) and (6.95,1.4) .. (10.93,3.29)   ;

\draw    (360.5,143.74) -- (344.75,128.4) ;
\draw [shift={(343.32,127)}, rotate = 404.26] [color={rgb, 255:red, 0; green, 0; blue, 0 }  ][line width=0.75]    (10.93,-3.29) .. controls (6.95,-1.4) and (3.31,-0.3) .. (0,0) .. controls (3.31,0.3) and (6.95,1.4) .. (10.93,3.29)   ;

\draw    (377.78,195) -- (372.59,166.97) ;
\draw [shift={(372.22,165)}, rotate = 439.51] [color={rgb, 255:red, 0; green, 0; blue, 0 }  ][line width=0.75]    (10.93,-3.29) .. controls (6.95,-1.4) and (3.31,-0.3) .. (0,0) .. controls (3.31,0.3) and (6.95,1.4) .. (10.93,3.29)   ;

\draw    (356.53,240) -- (370.31,220.63) ;
\draw [shift={(371.47,219)}, rotate = 485.42] [color={rgb, 255:red, 0; green, 0; blue, 0 }  ][line width=0.75]    (10.93,-3.29) .. controls (6.95,-1.4) and (3.31,-0.3) .. (0,0) .. controls (3.31,0.3) and (6.95,1.4) .. (10.93,3.29)   ;

\draw    (311.5,266.28) -- (336.64,256.45) ;
\draw [shift={(338.5,255.72)}, rotate = 518.63] [color={rgb, 255:red, 0; green, 0; blue, 0 }  ][line width=0.75]    (10.93,-3.29) .. controls (6.95,-1.4) and (3.31,-0.3) .. (0,0) .. controls (3.31,0.3) and (6.95,1.4) .. (10.93,3.29)   ;

\draw    (217.5,147.55) -- (194.31,136.7) ;
\draw [shift={(192.5,135.85)}, rotate = 385.08000000000004] [color={rgb, 255:red, 0; green, 0; blue, 0 }  ][line width=0.75]    (10.93,-3.29) .. controls (6.95,-1.4) and (3.31,-0.3) .. (0,0) .. controls (3.31,0.3) and (6.95,1.4) .. (10.93,3.29)   ;

\draw    (208.5,207.66) -- (179.43,215.8) ;
\draw [shift={(177.5,216.34)}, rotate = 344.36] [color={rgb, 255:red, 0; green, 0; blue, 0 }  ][line width=0.75]    (10.93,-3.29) .. controls (6.95,-1.4) and (3.31,-0.3) .. (0,0) .. controls (3.31,0.3) and (6.95,1.4) .. (10.93,3.29)   ;

\draw    (238.47,264) -- (227.6,281.31) ;
\draw [shift={(226.53,283)}, rotate = 302.12] [color={rgb, 255:red, 0; green, 0; blue, 0 }  ][line width=0.75]    (10.93,-3.29) .. controls (6.95,-1.4) and (3.31,-0.3) .. (0,0) .. controls (3.31,0.3) and (6.95,1.4) .. (10.93,3.29)   ;

\draw    (302,282) -- (302,301) ;
\draw [shift={(302,303)}, rotate = 270] [color={rgb, 255:red, 0; green, 0; blue, 0 }  ][line width=0.75]    (10.93,-3.29) .. controls (6.95,-1.4) and (3.31,-0.3) .. (0,0) .. controls (3.31,0.3) and (6.95,1.4) .. (10.93,3.29)   ;

\draw    (357.5,262.82) -- (372.14,279.5) ;
\draw [shift={(373.46,281)}, rotate = 228.72] [color={rgb, 255:red, 0; green, 0; blue, 0 }  ][line width=0.75]    (10.93,-3.29) .. controls (6.95,-1.4) and (3.31,-0.3) .. (0,0) .. controls (3.31,0.3) and (6.95,1.4) .. (10.93,3.29)   ;

\draw    (389.5,209.13) -- (413.55,214.53) ;
\draw [shift={(415.5,214.97)}, rotate = 192.65] [color={rgb, 255:red, 0; green, 0; blue, 0 }  ][line width=0.75]    (10.93,-3.29) .. controls (6.95,-1.4) and (3.31,-0.3) .. (0,0) .. controls (3.31,0.3) and (6.95,1.4) .. (10.93,3.29)   ;

\draw    (379.5,149.9) -- (399.1,143.5) ;
\draw [shift={(401,142.88)}, rotate = 521.9200000000001] [color={rgb, 255:red, 0; green, 0; blue, 0 }  ][line width=0.75]    (10.93,-3.29) .. controls (6.95,-1.4) and (3.31,-0.3) .. (0,0) .. controls (3.31,0.3) and (6.95,1.4) .. (10.93,3.29)   ;

\draw    (336.62,103) -- (346.54,81.81) ;
\draw [shift={(347.38,80)}, rotate = 475.08] [color={rgb, 255:red, 0; green, 0; blue, 0 }  ][line width=0.75]    (10.93,-3.29) .. controls (6.95,-1.4) and (3.31,-0.3) .. (0,0) .. controls (3.31,0.3) and (6.95,1.4) .. (10.93,3.29)   ;

\draw    (264.47,104) -- (257.24,84.87) ;
\draw [shift={(256.53,83)}, rotate = 429.3] [color={rgb, 255:red, 0; green, 0; blue, 0 }  ][line width=0.75]    (10.93,-3.29) .. controls (6.95,-1.4) and (3.31,-0.3) .. (0,0) .. controls (3.31,0.3) and (6.95,1.4) .. (10.93,3.29)   ;

\draw    (173.14,118) -- (165.74,105.05) ;
\draw [shift={(164.75,103.32)}, rotate = 420.26] [color={rgb, 255:red, 0; green, 0; blue, 0 }  ][line width=0.75]    (10.93,-3.29) .. controls (6.95,-1.4) and (3.31,-0.3) .. (0,0) .. controls (3.31,0.3) and (6.95,1.4) .. (10.93,3.29)   ;

\draw    (167.5,134.72) -- (151.51,140.76) ;
\draw [shift={(149.64,141.47)}, rotate = 339.3] [color={rgb, 255:red, 0; green, 0; blue, 0 }  ][line width=0.75]    (10.93,-3.29) .. controls (6.95,-1.4) and (3.31,-0.3) .. (0,0) .. controls (3.31,0.3) and (6.95,1.4) .. (10.93,3.29)   ;

\draw    (158.5,212.35) -- (144.65,202.65) ;
\draw [shift={(143.01,201.51)}, rotate = 394.99] [color={rgb, 255:red, 0; green, 0; blue, 0 }  ][line width=0.75]    (10.93,-3.29) .. controls (6.95,-1.4) and (3.31,-0.3) .. (0,0) .. controls (3.31,0.3) and (6.95,1.4) .. (10.93,3.29)   ;

\draw    (158.5,230.23) -- (149.66,240.68) ;
\draw [shift={(148.36,242.21)}, rotate = 310.24] [color={rgb, 255:red, 0; green, 0; blue, 0 }  ][line width=0.75]    (10.93,-3.29) .. controls (6.95,-1.4) and (3.31,-0.3) .. (0,0) .. controls (3.31,0.3) and (6.95,1.4) .. (10.93,3.29)   ;

\draw    (206.5,298.69) -- (192.05,302.96) ;
\draw [shift={(190.13,303.53)}, rotate = 343.53999999999996] [color={rgb, 255:red, 0; green, 0; blue, 0 }  ][line width=0.75]    (10.93,-3.29) .. controls (6.95,-1.4) and (3.31,-0.3) .. (0,0) .. controls (3.31,0.3) and (6.95,1.4) .. (10.93,3.29)   ;

\draw    (219,307) -- (219,329) ;
\draw [shift={(219,331)}, rotate = 270] [color={rgb, 255:red, 0; green, 0; blue, 0 }  ][line width=0.75]    (10.93,-3.29) .. controls (6.95,-1.4) and (3.31,-0.3) .. (0,0) .. controls (3.31,0.3) and (6.95,1.4) .. (10.93,3.29)   ;

\draw    (294.67,327) -- (290.27,334.2) ;
\draw [shift={(289.22,335.91)}, rotate = 301.43] [color={rgb, 255:red, 0; green, 0; blue, 0 }  ][line width=0.75]    (10.93,-3.29) .. controls (6.95,-1.4) and (3.31,-0.3) .. (0,0) .. controls (3.31,0.3) and (6.95,1.4) .. (10.93,3.29)   ;

\draw    (312.2,327) -- (321.35,337.77) ;
\draw [shift={(322.65,339.29)}, rotate = 229.64] [color={rgb, 255:red, 0; green, 0; blue, 0 }  ][line width=0.75]    (10.93,-3.29) .. controls (6.95,-1.4) and (3.31,-0.3) .. (0,0) .. controls (3.31,0.3) and (6.95,1.4) .. (10.93,3.29)   ;

\draw    (385.07,305) -- (386.76,324.01) ;
\draw [shift={(386.93,326)}, rotate = 264.92] [color={rgb, 255:red, 0; green, 0; blue, 0 }  ][line width=0.75]    (10.93,-3.29) .. controls (6.95,-1.4) and (3.31,-0.3) .. (0,0) .. controls (3.31,0.3) and (6.95,1.4) .. (10.93,3.29)   ;

\draw    (397.5,293.88) -- (415.16,295.03) ;
\draw [shift={(417.16,295.16)}, rotate = 183.73] [color={rgb, 255:red, 0; green, 0; blue, 0 }  ][line width=0.75]    (10.93,-3.29) .. controls (6.95,-1.4) and (3.31,-0.3) .. (0,0) .. controls (3.31,0.3) and (6.95,1.4) .. (10.93,3.29)   ;

\draw    (442.5,229.62) -- (454.72,240.13) ;
\draw [shift={(456.24,241.44)}, rotate = 220.71] [color={rgb, 255:red, 0; green, 0; blue, 0 }  ][line width=0.75]    (10.93,-3.29) .. controls (6.95,-1.4) and (3.31,-0.3) .. (0,0) .. controls (3.31,0.3) and (6.95,1.4) .. (10.93,3.29)   ;

\draw    (442.5,210.41) -- (459.6,200.79) ;
\draw [shift={(461.34,199.81)}, rotate = 510.64] [color={rgb, 255:red, 0; green, 0; blue, 0 }  ][line width=0.75]    (10.93,-3.29) .. controls (6.95,-1.4) and (3.31,-0.3) .. (0,0) .. controls (3.31,0.3) and (6.95,1.4) .. (10.93,3.29)   ;

\draw    (437,142.88) -- (451.66,147.67) ;
\draw [shift={(453.57,148.29)}, rotate = 198.07999999999998] [color={rgb, 255:red, 0; green, 0; blue, 0 }  ][line width=0.75]    (10.93,-3.29) .. controls (6.95,-1.4) and (3.31,-0.3) .. (0,0) .. controls (3.31,0.3) and (6.95,1.4) .. (10.93,3.29)   ;

\draw    (428.77,125) -- (439.91,111.31) ;
\draw [shift={(441.18,109.75)}, rotate = 489.14] [color={rgb, 255:red, 0; green, 0; blue, 0 }  ][line width=0.75]    (10.93,-3.29) .. controls (6.95,-1.4) and (3.31,-0.3) .. (0,0) .. controls (3.31,0.3) and (6.95,1.4) .. (10.93,3.29)   ;

\draw    (365.5,65.73) -- (381.83,62.76) ;
\draw [shift={(383.8,62.4)}, rotate = 529.7] [color={rgb, 255:red, 0; green, 0; blue, 0 }  ][line width=0.75]    (10.93,-3.29) .. controls (6.95,-1.4) and (3.31,-0.3) .. (0,0) .. controls (3.31,0.3) and (6.95,1.4) .. (10.93,3.29)   ;

\draw    (351.83,56) -- (350.45,41.81) ;
\draw [shift={(350.25,39.82)}, rotate = 444.43] [color={rgb, 255:red, 0; green, 0; blue, 0 }  ][line width=0.75]    (10.93,-3.29) .. controls (6.95,-1.4) and (3.31,-0.3) .. (0,0) .. controls (3.31,0.3) and (6.95,1.4) .. (10.93,3.29)   ;

\draw    (259.91,59) -- (269.66,44.21) ;
\draw [shift={(270.76,42.54)}, rotate = 483.39] [color={rgb, 255:red, 0; green, 0; blue, 0 }  ][line width=0.75]    (10.93,-3.29) .. controls (6.95,-1.4) and (3.31,-0.3) .. (0,0) .. controls (3.31,0.3) and (6.95,1.4) .. (10.93,3.29)   ;

\draw    (238.5,59.3) -- (224.28,46.97) ;
\draw [shift={(222.77,45.66)}, rotate = 400.90999999999997] [color={rgb, 255:red, 0; green, 0; blue, 0 }  ][line width=0.75]    (10.93,-3.29) .. controls (6.95,-1.4) and (3.31,-0.3) .. (0,0) .. controls (3.31,0.3) and (6.95,1.4) .. (10.93,3.29)   ;

\end{tikzpicture}
\caption{The ECSD $G(-2n+1, -2n+10)$.}
\label{G(-2n+1,-2n+10)}
\end{figure}

Using the classification of all degree 2 ECSDs, we can prove \cite{Neidinger} that the only ECSDs of degree 2 to have a single component are either $G(2n, -2n+1)$ or are isomorphic to an ECSD of the form $G(-2n+1, -2n+a)$ with $a = \pm 3^m+1$ for some $m \in \N_0$. Thus, by Theorem \ref{integerrepresentation} we have that 
\[\Z = \left\{\sum_{j=0}^k b_j(-2)^j : b_j \in \{1, a\}, k \in \N \right\}\] 
if and only if $a = \pm3^m + 1$ for some $m \in \N_0$.

\noindent 
\textit{AMS 2010 Mathematics Subject Classification:} Primary 11A07,  Secondary 05C20, 05C63, 11A63, 11A67.\\

\noindent 
\textit{Keywords:} exact covering system, infinite directed graph, non-standard digital representation\\

\noindent
(Concerned with sequences \seqnum{A002487} and \seqnum{A110081}.)

\end{document}